\documentclass[12pt,twoside,reqno]{amsart}
\usepackage[colorlinks=true,citecolor=blue]{hyperref}
\usepackage{mathptmx, amsmath, amssymb, amsfonts, amsthm, mathptmx, enumerate, color}
\usepackage{graphicx}
\usepackage{epstopdf}
\usepackage{comment}
\newtheorem{theorem}{Theorem}[section]
\newtheorem{corollary}{Corollary}[section]
\newtheorem{lemma}{Lemma}[section]
\newtheorem{proposition}{Proposition}[section]
\theoremstyle{definition}
\newtheorem{definition}{Definition}[section]
\newtheorem{example}{Example}[section]
\newtheorem{remark}{Remark}[section]

\newtheorem{problem}{Problem}[section]
\numberwithin{equation}{section}
\begin{document}
\setcounter{page}{1}

\vspace*{1.0cm}
\title[Convex and quasiconvex truncations of nonconvex functions]
{Convex and quasiconvex truncations of nonconvex functions}
\author[C. Pintea]{Cornel Pintea}
\address{
Babe\c s-Bolyai University, Faculty of Mathematics and Computer
Science, Department of Mathematics, Str. Kog\u alniceanu no. 1, 400084 Cluj-Napoca, Romania\\
}
\begin{abstract}
We consider nonconvex real valued functions whose truncations are either quasiconvex or even convex starting with a certain level.
Among them, the $C^2$-smooth functions whose level sets are all completely contained in the positive definite region of their Hessian matrices, 
starting with a certain level, are good examples of such functions. For such a function we show the injectivity of its restricted gradient to a large subset of the 
positive definite region of its Hessian matrices.
\end{abstract}
\begingroup
\renewcommand{\thefootnote}{}
\footnotetext{The author acknowledges support from the project “Singularities and Applications” - CF132/31.07.2023 funded by the European Union - NextGenerationEU - through Romania’s National
Recovery and Resilience Plan, and support by the grant CNRS-INSMI-IEA-329.}
\endgroup

\maketitle
\vspace*{-0.6cm}

\section{Introduction}
Inspired by the example of the product of square distance functions, we investigate here nonconvex functions whose 
sublevel sets start to be convex with a certain level and evaluate their deviation from (quasi)convexity. The region ${\rm Hess}^+(f)$ of a $C^2$-smooth function 
$f\!:\!\mathbb{R}^n\!\!\rightarrow\!\mathbb{R}$, where the associated Hessian matrix is positive definite, plays an important role in our investigations. 
Indeed, the level sets of a function with bounded complement $\mathbb{R}^n\setminus {\rm Hess}^+(f)$ start to be all contained in ${\rm Hess}^+(f)$ at a certain level, 
which make the associated sublevel sets to be all convex, although the function $f$ might not be even quasiconvex. In this respect we define and investigate the relations 
between the {\em the smallest quasiconvexity level} ${\rm sql}(f)$ and {\em the smallest convexity level} ${\rm scl}(f)$ of a, possibly, 
non-(quasi)convex function $f$ whose truncations $T_q(f)=\max\{q,f\}$  are all (quasi)convex starting with a certain value of $q$. These values signify, for good reasons, measures of the of the function's deviation from (quasi)convexity. 
The values
\[h_{\max}(f):=\max \left(f\big|_{\mathbb{R}^n\setminus {\rm Hess}^+(f)}\right)\mbox{ and }\nu_{\max}:=\max\left(f\big|_{C(f)}\right)\]  play some important role in these relations, whenever the critical set $C(f)$ and 
$\mathbb{R}^n\setminus {\rm Hess}^+(f)$ are bounded. 

The paper is organized as follows: In the second section we define the truncations of a function along with the truncated-(quasi)convex functions. 
For the late two types of functions we define the {\em the smallest convexity level} of a truncated convex  and 
{\em the smallest quasiconvexity level} of a truncated quasiconvex function respectively. We also observe here the first relations between these two quantities of a 
truncated convex function. The third section starts with the observation on the inclusion $f^{-1}\left(h_{\max}(f),+\infty\right)\subseteq {\rm Hess}^+(f)$ 
along with the inequality $h_{\max}(f)\geq \nu_{\max}(f)$, for a $C^2$-smooth function $f:\mathbb{R}^n\longrightarrow\mathbb{R}$ such that 
$\mathbb{R}^n\setminus {\rm Hess}^+(f)$ is bounded. If  the critical set of $f$ is additionally bounded, then the truncation-convexity is alo proved here along with the 
inequality ${\rm sql}(f)\leq {\rm scl}(f)\leq \max\{{\rm sql}(f), h_{\max}(f)\}$. We close this section with two corollaries and the example of the product of two squared
functions whose the smallest convexity level is equal with its smallest quasiconvexity level, which are further equal with the associated $h_{\max}$-value. 
In the fourth section we also prove the injectivity of the restricted gradient $\nabla f$ to the set $f^{-1}({\rm scl}(f),+\infty)$, where $f$ is a
$C^2$-smooth truncated convex function such that $f^{-1}({\rm scl}(f),+\infty)\subseteq{\rm Hess}^+(f)$.
The last section is dedicated to some open questions and several observations and comments.
\section{Convex and quasiconvex truncations}
Before we focus to the main results of the paper, we provide several remarks on the values mention in the introduction. We also state and prove preliminary results, some of which will be used in the sequel.
\begin{definition}
A {\em truncation} of a function $f:\mathbb{R}^n\longrightarrow\mathbb{R}$ is a function of type 
$T_q(f):\mathbb{R}^n\longrightarrow\mathbb{R}$, $T_q(f)(x)=\max\{q,f(x)\}$ for some $q\in\mathbb{R}$. 
Then a truncation $T_{q}(f)$ is quasiconvex if and only if 
the sublevel sets $f^{-1}((-\infty,r])$, $r\geq q$ are all convex. If such a level $q\geq \inf f$ exists, $f$ is said to be {\em truncated quasiconvex}. 
The smallest such level $q\geq \inf f$ will be called {\em the smallest quasiconvexity level} of $f$ and denoted by ${\rm sql}(f)$.
If there exists a level $q\geq \inf f$ such that the truncations $T_{r}(f)$ are all convex for $r\geq q$, then $f$ is said to be {\em truncated convex}. 
The smallest such level $q\geq \inf f$ will be called {\em the smallest convexity level} of $f$ and denoted by ${\rm scl}(f)$.
In spite of their labels, the smallest (quasi)convexity levels might only be achieved under some extra assumption. Their precise formal definitions are
\begin{align}
& {\rm sql}(f)=\inf\{ \ q \ | \ q\geq \inf f\mbox{ and }\  T_{r}(f)\mbox{ is quasiconvex for }r\geq q\}\mbox{ if }f\mbox{ is truncated quasiconvex}\nonumber\\
& {\rm scl}(f)=\inf\{ \ q \ | \ q\geq \inf f\mbox{ and }\  T_{r}(f)\mbox{ is convex for }r\geq q\}\mbox{ if }f\mbox{ is truncated convex}\nonumber
\end{align}
Observe that the truncated convexity implies the truncated quasiconvexity.
\end{definition}
\begin{example}
The smallest convexity and (quasi)convexity level of a convex function $f:\mathbb{R}^n\longrightarrow\mathbb{R}$ is ${\rm scl}(f)={\rm sql}(f)=\inf(f)$ which might not be achived. For example the smallest (quasi)convexity level of a nonconstant liniar functional $f:\mathbb{R}^n\longrightarrow\mathbb{R}$ is 
${\rm scl}(f)={\rm sql}(f)=-\infty$, while the smallest (quasi)convexity level of the function  
$E(x)=e^{x_1+\cdots+x_n}$ is ${\rm scl}(E)\!=\!{\rm sql}(E)\!=\!0$, but none of them is achieved by $f$ and $E$ respectively.
\end{example}
\begin{remark}
\label{rem10.10.2025.1}
\begin{enumerate}
\item Every function $f:\mathbb{R}^n\longrightarrow\mathbb{R}$ which is bounded from above is truncated convex and ${\rm scl}(f)\leq \sup(f)$, as the truncation $T_q(f)$ is the constant, obviously convex, function $q$ for every $q\geq \sup(f)$. Therefore, the interesting truncated convex functions $f:\mathbb{R}^n\longrightarrow\mathbb{R}$ are those for which $\sup(f)=+\infty$.
\item Since convexity implies quasiconvexity, it follows that \[{\rm scl}(f)\geq {\rm sql}(f) \geq \inf(f).\]
Alternatively, we observe that the inequality ${\rm scl}(f)<{\rm sql}(f)$ fails as $f^{-1}(-\infty,q]$ is convex for $q>{\rm scl}(f)$ and there is some $q'<{\rm sql}(f)$
such that $f^{-1}(-\infty,q']$ is not convex.\label{rem11.01.2026.1}
\item If $f:\mathbb{R}^n\longrightarrow\mathbb{R}$ is a bounded bellow convex function, then ${\rm scl}(f)={\rm sql}(f)=\inf f$.
\item If $f,g:\mathbb{R}^n\longrightarrow\mathbb{R}$ are truncated quasiconvex functions, then 
$\max\{f,g\}$ is also truncated quasiconvex and  ${\rm sql}(\max\{f,g\})\leq \max\{ {\rm sql}(f),{\rm sql}(g)\}$. 
Indeed, for 
\[
r\geq \max\{{\rm sql}(f),{\rm sql}(g)\}
\]
we have
\begin{align}
\max\{f,g\}^{-1}((-\infty,r]) & =\{x\in\mathbb{R}^n \ | \ \max\{f(x),g(x)\}\leq r\}\nonumber\\
& = \{x\in\mathbb{R}^n \ | \ f(x)\leq r\mbox{ and }g(x)\leq r\}\nonumber\\
& =f^{-1}((-\infty,r])\cap g^{-1}((-\infty,r]),\nonumber
\end{align}
which is convex as the intersection of two convex sets.
 \item The smallest quasiconvexity level of the truncated convex and lower semicontinuous function $f:\mathbb{R}^n\longrightarrow\mathbb{R}$ is achieved by $f$ whenever $f$ is 
 norm coercive, i.e. \[\displaystyle\lim_{\|x\|\rightarrow+\infty}f(x)=+\infty.\]
 If $f$ is additionally convex, then ${\rm scl}(f)=\min f$, while $\min f<{\rm scl}(f)<+\infty$ otherwise.
\item  If $f:\mathbb{R}^n\longrightarrow\mathbb{R}$ is a $C^2$-smooth function such that $\mathbb{R}^n\setminus {\rm Hess}^+(f)$  is bounded, then $\sup(f)=+\infty$ is $f$ is unbounded from above, and
$f^{-1}\left(h_{\max}(f),+\infty\right)\subseteq {\rm Hess}^+(f)$, where 
\[{\rm Hess}^+(f):=\{x\in\mathbb{R}^n \ | \ H_f(x)\mbox{ is positive definite }\}\] and 
\[
h_{\max}(f):=\max\left(f\big|_{\mathbb{R}^n\setminus {\rm Hess}^+(f)}\right).
\]
Indeed, otherwise $f^{-1}(q)\cap\left(\mathbb{R}^n\setminus {\rm Hess}^+(f)\right)\neq\emptyset$
for some $q>h_{\max}(f)$. If $x\in f^{-1}(q)\cap\left(\mathbb{R}^n\setminus {\rm Hess}^+(f)\right)$, i.e. $f(x)=q$ and $x\in \mathbb{R}^n\setminus {\rm Hess}^+(f)$, then 
\[q=f(x)\leq h_{\max}(f)=\max\left(f\big|_{\mathbb{R}^n\setminus {\rm Hess}^+(f)}\right)<q,\] which is absurd.
 \item \label{rem30.01.2026.1}
If $f:\mathbb{R}^n\longrightarrow\mathbb{R}$ is a $C^2$-smooth truncated convex function, then 
\begin{equation}\label{eq27.03.2021.1}
f^{-1}({\rm scl}(f),+\infty)\subseteq{\rm Hess}^+_0(f),\end{equation} 
where ${\rm Hess}^+_0(f)$ stands for the set 
\[\{x\in\mathbb{R}^n \ | \ H_f(x)\mbox{ is positive semi-definite }\}\]
and $H_f(x)$ is the Hessian matrix of $f$ at $x\in\mathbb{R}^n$.
Indeed, for every $x\in f^{-1}({\rm scl}(f),+\infty)$ there exists an open ball $B_x\subseteq f^{-1}({\rm scl}(f),+\infty)$ such that the restriction 
$f\big|_{B_x}$ is convex as $f\big|_{B_x}=T_{{\rm scl}(f)}(f)\big|_{B_x}$ and $T_{{\rm scl}(f)}(f)$ is convex. Thus, the 
Hessian matrix $H_f(y)$ is positive semi-definite at every point $y\in B_x$. Therefore $B_x\subseteq {\rm Hess}^+_0(f)$ for every $x\in f^{-1}({\rm scl}(f),+\infty)$
and the inclusion \eqref{eq27.03.2021.1} follows easily.
 \item If $f:\mathbb{R}^n\longrightarrow\mathbb{R}$ is not a quasiconvex function, but truncated quasiconvex, then ${\rm sql}(f)$ measures somehow the function's deviation from quasiconvexity. Indeed, for every $\varepsilon>0$ there exists 
$y_{\varepsilon}\!\in\!({\rm sql}(f)\!-\!\varepsilon,{\rm sql}(f))$ such that the sublevel set $f^{-1}(-\infty,y_\varepsilon]$ is not convex and ${\rm sql}(f)$ is the smallest value of $f$ with this property. 
\item If $f:\mathbb{R}^n\longrightarrow\mathbb{R}$ is not a convex but a truncated convex $C^2$-smooth function, then ${\rm scl}(f)$ measures somehow the function's deviation from convexity. Indeed, 
for every $\varepsilon>0$ there exists some $z_{\varepsilon}\!\in\!({\rm scl}(f),{\rm scl}(f)\!+\!\varepsilon)$ such that the restriction of  $f$ to the (convex) sublevel set $f^{-1}(-\infty,z_\varepsilon]$ is not convex and ${\rm scl}(f)$ is the smallest value 
of $f$ with this property.  Certainly, the convexity of the restriction of $f$ to some sublevel set $f^{-1}(-\infty,{\rm scl}(f)\!+\!\varepsilon)$ implies, via \cite[Theorem 4.3.1(i)]{Uruty-Lemarechal}, the positive semidefinitness of the hessian matrix $H_f$ of $f$ all over this open sublevel set. The hessian matrix of $f$ is positive definite over the set $f^{-1}({\rm scl}(f),+\infty)$, via the item \eqref{rem30.01.2026.1}. Therefore $H_f$ is positive semindefinite everywhere, which shows, via \cite[Theorem 4.3.1(i)]{Uruty-Lemarechal} once again, the global convexity of $f$, a contradiction with the initial hypothesis.\label{rem30.01.2026.3}
\end{enumerate}\label{rem30.01.2026.2}
\end{remark}
\begin{proposition}\label{theSCLachived}
If $f:\mathbb{R}^n\longrightarrow\mathbb{R}$ is truncated convex and lower semicontinuous, then $T_{\rm scl}(f)$ is convex.
\end{proposition}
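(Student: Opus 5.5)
The plan is to realize $T_{{\rm scl}(f)}(f)$ as a pointwise limit of convex functions. Set $s={\rm scl}(f)$. We may assume $s$ is finite; if $s=-\infty$ the statement is read as the convexity of $f$ itself, which follows from the same argument with $r\to-\infty$. By the definition of ${\rm scl}(f)$ as an infimum, for every $r>s$ there is some $q\le r$ with $q\geq\inf f$ such that $T_{r'}(f)$ is convex for all $r'\geq q$. In particular $T_r(f)$ is convex for every $r>s$.

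Next, fix a decreasing sequence $r_k\searrow s$. For each $x\in\mathbb{R}^n$ we have
\[
T_{r_k}(f)(x)=\max\{r_k,f(x)\}\longrightarrow \max\{s,f(x)\}=T_s(f)(x),
\]
since $t\mapsto\max\{t,a\}$ is continuous. Convexity is preserved under pointwise limits: for $x,y\in\mathbb{R}^n$ and $\lambda\in[0,1]$ the inequality
\[
T_{r_k}(f)(\lambda x+(1-\lambda)y)\leq \lambda T_{r_k}(f)(x)+(1-\lambda)T_{r_k}(f)(y)
\]
passes to the limit as $k\to\infty$ and gives convexity of $T_s(f)$. Equivalently, $T_s(f)=\inf_{r>s}T_r(f)$ is the infimum of a decreasing family of convex functions, and its epigraph is the intersection of the nested convex sets ${\rm epi}\,T_{r}(f)$ together with their closure in the vertical direction.

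The only delicate point is whether anything beyond pointwise convergence is needed. Since all functions here are real valued, no $\infty-\infty$ issue arises and the argument above is complete. Lower semicontinuity is therefore not genuinely needed for convexity. Where it does matter is in ensuring that $T_s(f)$ is closed (lower semicontinuous), and that $f^{-1}(-\infty,s]$, which equals $\bigcap_{r>s}f^{-1}(-\infty,r]$, is a closed convex set. I would remark on this in the proof, so that the hypothesis is seen as guaranteeing $T_s(f)$ is a closed proper convex function. I expect this to be the only step needing care: checking that the hypothesis on $s$ yields convexity for \emph{all} $r>s$, rather than only along some sequence, which is immediate from the infimum definition.
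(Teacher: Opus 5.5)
Your proof is correct, and it takes a different and more elementary route than the paper's.

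The paper first proves an auxiliary lemma: for $r_n\searrow{\rm scl}(f)$, one has ${\rm cl}\,{\rm epi}(T_{{\rm scl}(f)}(f))={\rm cl}\bigl(\bigcup_{n\geq 1}{\rm epi}(T_{r_n}(f))\bigr)$. The right-hand side is convex, being the closure of an increasing union of convex sets. Lower semicontinuity then makes ${\rm epi}(T_{{\rm scl}(f)}(f))$ closed, so it equals that convex closure. This is exactly where the paper uses the hypothesis, since a set squeezed between a convex set and its closure need not itself be convex.

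You instead pass the convexity inequality to the pointwise limit $T_{r_k}(f)(x)\to T_{{\rm scl}(f)}(f)(x)$. This needs no topology and shows that lower semicontinuity is superfluous. The paper's version gives a geometric description of ${\rm epi}(T_{{\rm scl}(f)}(f))$ via the nested epigraphs, but it is longer and needs the extra hypothesis.

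Two of your side remarks should be corrected, though neither carries any weight in the argument:
\begin{enumerate}
\item As $r\searrow s$ the epigraphs ${\rm epi}\,T_r(f)$ \emph{increase}. So ${\rm epi}\,T_s(f)$ is their \emph{union} together with the missing points $(x,s)$ with $f(x)\leq s$, not their intersection. That sentence can simply be corrected or deleted.
\item Lower semicontinuity is not needed for closedness either. A real-valued convex function on $\mathbb{R}^n$ is automatically continuous, so for finite $s$ the function $T_s(f)$ is continuous, and $f^{-1}(-\infty,s]=T_s(f)^{-1}(-\infty,s]$ is closed for free.
\end{enumerate}
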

\begin{lemma}
If $(r_n)$ is a sequence such that $r_n\searrow {\rm scl}(f)$ as $n\longrightarrow\infty$, then 
\begin{equation}
{\rm cl}\left({\rm epi}(T_{{\rm scl}(f)}(f))\right)={\rm cl}\left(\displaystyle\bigcup_{n\geq 1}{\rm epi}(T_{r_n}(f))\right).
\end{equation}
\end{lemma}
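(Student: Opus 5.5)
Write $s={\rm scl}(f)$. The plan is to prove both inclusions of the closures directly, using only two facts: the truncations are monotone in the level, and $f$ is lower semicontinuous, so that $T_s(f)$ is lower semicontinuous and ${\rm epi}(T_s(f))$ is closed.

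\textbf{Monotonicity.} Since $q\mapsto \max\{q,f(x)\}$ is nondecreasing and $r_n\ge s$, we have $T_{r_n}(f)\ge T_s(f)$ pointwise. Hence ${\rm epi}(T_{r_n}(f))\subseteq {\rm epi}(T_s(f))$ for every $n$. Taking the union over $n$ and then closures gives
\[
{\rm cl}\Bigl(\bigcup_{n\ge1}{\rm epi}(T_{r_n}(f))\Bigr)\subseteq {\rm cl}\bigl({\rm epi}(T_s(f))\bigr).
\]

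\textbf{Approximation.} For the reverse inclusion it suffices to show ${\rm epi}(T_s(f))\subseteq {\rm cl}\bigl(\bigcup_n {\rm epi}(T_{r_n}(f))\bigr)$, since the right-hand side is closed. Fix $(x,t)$ with $t\ge \max\{s,f(x)\}$, and set $t_n:=t+(r_n-s)$, so that $t_n\to t$. Then
\[
t_n\ge \max\{s,f(x)\}+(r_n-s)\ge \max\{r_n,f(x)\}=T_{r_n}(f)(x).
\]
The last inequality holds because $r_n-s\ge0$: the right-hand side of the first inequality dominates both $r_n$ and $f(x)$. Thus $(x,t_n)\in{\rm epi}(T_{r_n}(f))$ and $(x,t_n)\to(x,t)$, which gives the claim.

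\textbf{Expected obstacle and use in the proposition.} There is no real difficulty here; the only point needing care is the choice of the shifted heights $t_n$. In the application to Proposition~\ref{theSCLachived}, lower semicontinuity makes ${\rm epi}(T_s(f))$ closed. The sets ${\rm epi}(T_{r_n}(f))$ are convex and increase as $r_n$ decreases, so their union is an increasing union of convex sets and is therefore convex. Its closure is then convex, and by the lemma this closure equals ${\rm epi}(T_s(f))$, so $T_s(f)$ is convex. I also note that the lemma only needs $r_n\searrow s$ with $s$ finite, since we used $r_n\ge s$ and $r_n-s\to0$.
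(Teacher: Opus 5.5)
Your proof is correct and follows essentially the same route as the paper: monotonicity of the truncations gives ${\rm cl}\bigl(\bigcup_{n\ge1}{\rm epi}(T_{r_n}(f))\bigr)\subseteq{\rm cl}\bigl({\rm epi}(T_{s}(f))\bigr)$, and approximating each point of ${\rm epi}(T_s(f))$ vertically by points of ${\rm epi}(T_{r_n}(f))$ gives the reverse inclusion. Your shifted heights $t_n=t+(r_n-s)$ handle in one stroke what the paper treats as separate cases ($y>s$ versus $y=s$), and lower semicontinuity, which your plan lists as an ingredient, is in fact not used in the lemma itself, only in the subsequent proposition.
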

\begin{proof}
We first observe that 
\[
\displaystyle\bigcup_{n\geq 1}{\rm epi}(T_{r_n}(f))\subseteq {\rm epi}(T_{{\rm scl}(f)}(f)), 
\]
as $T_{r_1}(f)\geq T_{r_2}(f)\geq \cdots\geq T_{r_n}(f)\geq \cdots\geq T_{{\rm scl}(f)}(f)$ and
therefore
\begin{equation}\label{asofepi}
{\rm epi}(T_{r_1}(f))\subseteq  {\rm epi}(T_{r_2}(f))\subseteq\cdots\subseteq  {\rm epi}(T_{r_n}(f))\subseteq \cdots\subseteq {\rm epi}(T_{{\rm scl}(f)}(f)).
\end{equation}
Thus
\begin{equation}\label{inclofclepi}
{\rm cl}\left(\displaystyle\bigcup_{n\geq 1}{\rm epi}(T_{r_n}(f))\right)\subseteq {\rm cl}\left({\rm epi}(T_{{\rm scl}(f)}(f))\right).
\end{equation}
Since  ${\rm epi}(T_{r_1}(f)),  {\rm epi}(T_{r_2}(f)),\ldots,  {\rm epi}(T_{r_n}(f)), \ldots$ are all convex and 
\[
{\rm epi}(T_{r_1}(f))\subseteq  {\rm epi}(T_{r_2}(f))\subseteq\cdots\subseteq  {\rm epi}(T_{r_n}(f))\subseteq \cdots\subseteq,   
\]
it follows that 
\[{\rm cl}\left(\displaystyle\bigcup_{n\geq 1}{\rm epi}(T_{r_n}(f))\right)\] is convex too. For the opposite inclusion we first show that
\begin{equation}\label{inclofclepi-1}
{\rm epi}(T_{{\rm scl}(f)}(f))\subseteq{\rm cl}\left(\displaystyle\bigcup_{n\geq 1}{\rm epi}(T_{r_n}(f))\right),
\end{equation}
which proves the opposite inclusion. In this respect we consider $(x,y)\in {\rm epi}(T_{{\rm scl}(f)}(f))$, i.e. 
$y\geq T_{{\rm scl}(f)}(f)(x)=\max\{f(x),{\rm scl}(f)\}$. If $T_{{\rm scl}(f)}(f)(x)=f(x)>{\rm scl}(f)$ or $y>T_{{\rm scl}(f)}(f)(x)={\rm scl}(f)\geq f(x)$,
then there exists $n_0\geq 1$ such that $y\geq r_n$ for every $n\geq n_0$, as $r_n\searrow {\rm scl}(f)\leq T_{{\rm scl}(f)}(f))(x)$.
Therefore $y\geq\max\{f(x),r_n\}=T_{r_n}(f)(x)$, i.e. $(x,y)\in {\rm epi}(T_{r_n}(f))$ for every $n\geq n_0$, which shows that 
\[(x,y)\in\displaystyle\bigcup_{n\geq 1}{\rm epi}(T_{r_n}(f))\subseteq {\rm cl}\left(\displaystyle\bigcup_{n\geq 1}{\rm epi}(T_{r_n}(f))\right).\]
If $y=T_{{\rm scl}(f)}(f)(x)={\rm scl}(f)\geq f(x)$, then 
\[
(x,r_n)\in {\rm epi}(T_{r_n}(f))\subseteq  \displaystyle\bigcup_{n\geq 1}{\rm epi}(T_{r_n}(f)), \ \forall n\geq 1,
\]
which shows that $(x,y)=\displaystyle\lim_{n\rightarrow\infty}(x,r_n)\in {\rm cl}\left(\displaystyle\bigcup_{n\geq 1}{\rm epi}(T_{r_n}(f))\right)$.
\end{proof}
\begin{proof}[Proof of Proposition \ref{theSCLachived}]
We only need to use the lower semicontinuity of \[T_{{\rm scl}(f)}(f)=\max\{f,{\rm scl}(f)\}\] which is characterized by 
the closedness of ${\rm epi}(T_{{\rm scl}(f)}(f))$. Therefore $T_{{\rm scl}(f)}(f)$ is convex as
\[
{\rm epi}(T_{{\rm scl}(f)}(f))={\rm cl}\left({\rm epi}(T_{{\rm scl}(f)}(f))\right)={\rm cl}\left(\displaystyle\bigcup_{n\geq 1}{\rm epi}(T_{r_n}(f))\right). 
\]
is convex.
\end{proof}
\begin{proposition}\label{prop27.03.2021.1}
If $f:\mathbb{R}^n\longrightarrow\mathbb{R}$ is a $C^1$-smooth truncated convex function such that ${\rm scl}(f)\in {\rm Im}(f)$ is
a regular value of $f$, then ${\rm scl}(f)$ is achieved and 
\begin{align}
& \partial T_{{\rm scl}(f)}(f)(x)=\{(\nabla f)_x\}, \ \forall x\in f^{-1}({\rm scl(f)},+\infty)\label{eq31.10.2025.1}\\
& \partial T_{\rm scl(f)}(x)=\{t(\nabla f)_x \ | \ t\in[0,1]\}, \ \forall x\in f^{-1}({\rm scl}(f)),\label{eq31.10.2025.10}
\end{align}
where 
$\partial T_{{\rm scl}(f)}(f)(x):=\{x^*\in\mathbb{R}^n \ | \ T_{{\rm scl}(f)}(f)(y)\geq T_{{\rm scl}(f)}(f)(x)+\langle y-x,x^*\rangle, \ \forall y\in\mathbb{R}^n\}$
is the {\em subdifferential} of $f$ at $x$.
\end{proposition}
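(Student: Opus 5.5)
The proof has three steps: first show that ${\rm scl}(f)$ is achieved, then compute the subdifferential on the open set where $f>{\rm scl}(f)$, and finally compute it on the level set $f^{-1}({\rm scl}(f))$. Write $s={\rm scl}(f)$ and $T=T_s(f)=\max\{f,s\}$.

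\emph{Step 1: $s$ is achieved.} A $C^1$ function is continuous, hence lower semicontinuous. Proposition \ref{theSCLachived} therefore applies and gives that $T$ is convex. Thus $T_r(f)$ is convex for every $r\geq s$, so the infimum defining ${\rm scl}(f)$ is attained.

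\emph{Step 2: the open region $f>s$.} Fix $x$ with $f(x)>s$. By continuity there is a ball $B_x$ around $x$ on which $f>s$, so $T=f$ on $B_x$. Hence $T$ is differentiable at $x$ with gradient $(\nabla f)_x$. For a convex function that is differentiable at $x$, the subdifferential is the singleton consisting of the gradient. To see this directly: if $x^*\in\partial T(x)$, put $y=x+tv$ with $t\to0^+$. The subgradient inequality gives $\langle v,(\nabla f)_x\rangle\geq\langle v,x^*\rangle$ for every $v$, and replacing $v$ by $-v$ forces $x^*=(\nabla f)_x$. Conversely, convexity of $T$ together with differentiability at $x$ gives the gradient inequality, so $(\nabla f)_x\in\partial T(x)$. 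This proves \eqref{eq31.10.2025.1}.

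\emph{Step 3: the level set $f=s$.} Fix $x\in f^{-1}(s)$; this set is nonempty because $s\in{\rm Im}(f)$. I would use two facts.

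\begin{enumerate}
\item[(a)] The one-sided directional derivative of $T$ at $x$ is $T'(x;v)=\max\{0,\langle(\nabla f)_x,v\rangle\}$. This holds because $T=\max\{f,s\}$, both pieces are active at $x$, and both are $C^1$; it follows from the max rule for directional derivatives.
\item[(b)] For a finite convex function, $\partial T(x)=\{x^*\mid\langle x^*,v\rangle\leq T'(x;v)\ \forall v\}$.
\end{enumerate}

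The sublinear function $v\mapsto\max\{0,\langle a,v\rangle\}$ with $a=(\nabla f)_x$ is the support function of the segment $[0,a]$. By (b), $\partial T(x)$ is the closed convex set whose support function is this one, namely $\{ta\mid t\in[0,1]\}$.

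For a more hands-on version of the two inclusions:

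\begin{enumerate}
\item[($\supseteq$)] The point $0$ lies in $\partial T(x)$ because $T\geq s=T(x)$. The point $a$ lies in $\partial T(x)$ by the gradient inequality: as in Step 2, convexity of $T$ on the closure of $\{f>s\}$, together with approximation from points $x_k\to x$ with $f(x_k)>s$ (these exist since $a\neq0$), gives $T(y)\geq s+\langle y-x,a\rangle$. Convexity of $\partial T(x)$ then yields the whole segment.
\item[($\subseteq$)] Take $x^*\in\partial T(x)$ and test with $y=x+\tau v$, $\tau\to0^+$. For $v$ with $\langle a,v\rangle\leq 0$ this gives $\langle x^*,v\rangle\leq 0$. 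By Farkas' lemma, $x^*=ta$ for some $t\geq 0$. Testing with $v=a$ gives $t|a|^2\leq|a|^2$, so $t\leq1$.
\end{enumerate}

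The main obstacle is the boundary case in Step 3, where $T$ is not differentiable. The regular-value hypothesis $a\neq0$ is needed there. It ensures that $\{f>s\}$ accumulates at $x$, so the gradient inequality can be transferred to the boundary. It also makes the Farkas step meaningful: without it, the cone $\{v\mid\langle a,v\rangle\leq0\}$ would be all of $\mathbb{R}^n$ and the conclusion would degenerate to $\{0\}$.
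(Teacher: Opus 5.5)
Your proof is correct. Step 1 is the same as the paper's, but Steps 2--3 take a different route.

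The paper gets both formulas in one line from the max-rule for subdifferentials of Hiriart-Urruty--Lemar\'echal (Corollary 4.3.2). That rule takes the convex hull of the subdifferentials of the active pieces. Here it yields ${\rm conv}\{(\nabla f)_x,0\}$ on $f^{-1}({\rm scl}(f))$, and $\{(\nabla f)_x\}$ where only $f$ is active. However, the rule is stated for convex pieces, and $f$ is not convex here. Near a point of $f^{-1}({\rm scl}(f))$, $f$ may fail to be convex on the side $f<{\rm scl}(f)$. This already happens for $f_a$ at $(0,\pm a)$, where the Hessian is indefinite at $(0,\pm(a-\varepsilon))$. So the citation only applies after extra justification, e.g.\ via Clarke's max-rule for $C^1$ functions.

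Your computation $T'(x;v)=\max\{0,\langle(\nabla f)_x,v\rangle\}$ uses only the differentiability of $f$ at $x$. Convexity is needed only for $T$, and Proposition \ref{theSCLachived} supplies it. Combined with the support-function description of $\partial T(x)$, this is precisely the proof of the max-rule, carried out under the hypotheses actually available. So the paper's route buys brevity, while yours is self-contained and fills the gap in the cited rule.

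One correction to your closing comment: the regular-value hypothesis is not needed for the formulas. If $(\nabla f)_x=0$, your argument via (a) and (b) still gives $\partial T(x)=\{0\}=\{t(\nabla f)_x \mid t\in[0,1]\}$. The condition $a\neq 0$ only serves your hands-on variant. What is genuinely used is ${\rm scl}(f)\in{\rm Im}(f)$, which makes ${\rm scl}(f)$ finite so that Proposition \ref{theSCLachived} applies.
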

\begin{proof} 
The smallest convexity level ${\rm scl}(f)$ is, indeed, achived via Proposition \ref{theSCLachived}. The equalities \eqref{eq31.10.2025.1}, 
\eqref{eq31.10.2025.10} follow by using the well known 
result on the subdifferential of the maximum of a finite family of functions (see e.g. \cite[Corollary 4.3.2, p. 266]{Uruty-Lemarechal-1})
\[\partial T_{\rm scl(f)}(x)=\partial \max\{f,{\rm scl}(f)\}(x)={\rm conv}\{(\partial f)(x),(\partial T_{\rm scl(f))(x)}\}={\rm conv}\{(\nabla f)(x),0\},\]
as $T_{{\rm scl}(f)}(f)(x)=f(x)$,  for all $x\in f^{-1}({\rm scl(f)},+\infty)$.
\end{proof}
\section{The ${\rm Hess}^+$ region towards truncation convexity}
Following up the discussion in the introduction, we are now going to provide several relations involving the values ${\rm sql}$, ${\rm scl}$ along with $h_{\max}$ and $\nu_{\max}$. The additional assumptions on a quasiconvex function to have both the critical set and the complement of its ${\rm Hess}^+$ region bounded ensure the truncated convexity of the function.
\begin{proposition}\label{prop19.10.2025.1}
If $f:\mathbb{R}^n\longrightarrow\mathbb{R}$ is a norm-coercive $C^2$-smooth function such that $\mathbb{R}^n\setminus {\rm Hess}^+(f)$ and $C(f)$ are bounded, 
then $h_{\max}(f)\geq \nu_{\max}(f)$, where 
\[
\nu_{\max}(f)=\max \left(f\big|_{C(f)}\right).
\]
\end{proposition}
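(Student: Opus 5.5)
The plan is to argue by contradiction using a mountain-pass type argument. Suppose some critical point $x_0\in C(f)$ satisfies $c:=f(x_0)>h_{\max}(f)$. By the inclusion $f^{-1}(h_{\max}(f),+\infty)\subseteq {\rm Hess}^+(f)$ from Remark \ref{rem30.01.2026.2}, we get $x_0\in{\rm Hess}^+(f)$. Hence $x_0$ is a nondegenerate critical point with positive definite Hessian, and therefore a strict local minimum. Concretely, there are $\rho>0$ and $\delta>0$ such that $f\geq c+\delta$ on the sphere $\|x-x_0\|=\rho$.

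We will assume $\mathbb{R}^n\setminus{\rm Hess}^+(f)\neq\emptyset$, which is implicit in the definition of $h_{\max}(f)$ as a maximum. Pick a point $a$ in this set, so that $f(a)\leq h_{\max}(f)<c$; shrinking $\rho$ if necessary, $a$ lies outside the ball $B(x_0,\rho)$. Let $\Gamma$ be the family of continuous paths $\gamma:[0,1]\to\mathbb{R}^n$ with $\gamma(0)=x_0$ and $\gamma(1)=a$, and set
\[
d:=\inf_{\gamma\in\Gamma}\max_{t\in[0,1]} f(\gamma(t)).
\]
Every such path crosses the sphere, so $d\geq c+\delta>h_{\max}(f)$. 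Norm-coercivity makes the sublevel sets of $f$ compact, and hence $f$ satisfies the Palais--Smale condition. The mountain pass theorem then yields a critical point $z$ with $f(z)=d$, and moreover $z$ can be chosen not to be a local minimum: by Hofer's refinement, in the set $K_d$ of critical points at level $d$ there is one that is not a local minimizer. I plan to invoke this directly, or to reprove it in the present simple situation as described below.

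Since $f(z)=d>h_{\max}(f)$, again $z\in{\rm Hess}^+(f)$, so $z$ is a strict local minimum. This contradicts the choice of $z$. Hence $f(x)\leq h_{\max}(f)$ for every $x\in C(f)$, that is $\nu_{\max}(f)\leq h_{\max}(f)$. Boundedness of $C(f)$, together with the closedness of $C(f)$ and the continuity of $f$, guarantees that $\nu_{\max}(f)$ is a genuine maximum.

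The main obstacle is the last point: ensuring that the minimax critical point is not a local minimum. If I prefer to avoid citing Hofer, I would argue directly as follows. Every critical point of $f$ at level $d$ is a nondegenerate strict local minimum, hence isolated. Moreover $K_d$ is compact by coercivity, so it is finite. Around each point of $K_d$ choose a small ball $B_i$ on whose boundary $f>d+\eta$; outside $\bigcup_i B_i$, Palais--Smale gives a uniform lower bound on $\|\nabla f\|$ near level $d$. Deforming a near-optimal path by the negative gradient flow, and rerouting it around each $B_i$ (which a path can enter and leave only at levels above $d+\eta$), pushes its maximum below $d$. This contradicts the definition of $d$.
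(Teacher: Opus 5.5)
Your proof is correct, and it takes a genuinely different route from the paper's.

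The paper picks $p$ with $f(p)=\max f|_{C(f)}$, which is where the compactness of $C(f)$ is used, so that $(f(p),+\infty)$ consists of regular values. By the Morse lemma, for $q$ slightly above $f(p)$ the component of $f^{-1}(-\infty,q]$ through $p$ is a small ball, and coercivity yields a second component of $f^{-1}(q)$. The non-critical neck principle then propagates this disconnectedness to all higher levels. The contradiction comes from the connectedness of high level sets quoted from \cite[Theorem 3.7]{Bro-Pi}, which the paper elsewhere invokes only for $n=2$.

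You replace the neck principle and that external theorem by the mountain pass theorem. The paper shows that the basin of the offending minimum never merges with the rest of the sublevel sets. You show instead that it must merge at a critical level $d\geq c+\delta>h_{\max}(f)$ carrying a critical point that is not a local minimum, and such a point cannot exist inside ${\rm Hess}^+(f)$.

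Your route is dimension-free and self-contained. Your direct deformation argument suffices, so Hofer's refinement is not needed. No rerouting is required there either. With $f\geq d$ on $\overline{B_i}$, a path from $x_0$ with $\max f\circ\gamma<d+\eta$ never reaches $\partial B_i$. The negative gradient flow, cut off to vanish below level $d-\epsilon$ with $\epsilon<\delta$, keeps $x_0$ and $a$ fixed and only lowers $f$.

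Your argument also never uses the boundedness of $C(f)$. It actually proves $C(f)\subseteq f^{-1}(-\infty,h_{\max}(f)]$, which is compact by coercivity. So for norm-coercive $f$ that hypothesis is redundant, and this answers the first question of Problem \ref{Prob01.02.2026.1} affirmatively. When ${\rm Hess}^+(f)=\mathbb{R}^n$, $f$ is strictly convex with a single critical point anyway.

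In exchange, the paper's argument gives a more geometric picture of how the level sets would have to behave. It pays for this with the extra hypothesis and a dimension-specific external result.
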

\begin{proof}
Since the critical set $C(f)$ of $f$ is also closed, it follows, combined with its boundedness, that  $C(f)$  is 
compact. Also ${\rm Hess}^+(f)$ is an open set, as follows via the Sylvester criterion.
Since \[C(f)=[{\rm Hess}^+(f)\cap C(f)]\cup[(\mathbb{R}^n\setminus {\rm Hess}^+(f))\cap C(f)],\] it is enough to show that 
that $h_{\max}(f)\geq\sup f\big|_{{\rm Hess}^+(f)\cap C(f)}$ as the inequality \[h_{\max}(f)\geq \max~f\big|_{[(\mathbb{R}^n\setminus {\rm Hess}^+(f))\cap C(f)]}\] is obvious.
In this respect we first observe that ${\rm Hess}^+(f)\cap C(f)$ consists in Morse critical points of index zero, i.e. all these critical points are 
local minima, each of which is an isolated local minima critical point. By using the Morse Lemma, the local behaviour of $f$
close to each of these Morse critical points is given by the square norm of 
the ambient space. The components of the appropriate sublevel sets containing these points are diffeomorphic balls of the ambeint space and the components of the corresponding level sets are diffeomorphic spheres. We now assume that  
\begin{equation}\sup~f\big|_{{\rm Hess}^+(f)\cap C(f)}>h_{\max}(f)=\max\left(f\big|_{\mathbb{R}^n\setminus {\rm Hess}^+(f)}\right)\label{eq09.10.2025.1}\end{equation} and consider 
$p\in C(f)$ such that $f(p)=\max~f\big|_{C(f)}$. Observe that $p\in {\rm Hess}^+(f)\cap C(f)$, 
as $\max~f\big|_{C(f)}$ cannot be achived on $[\mathbb{R}^n\setminus {\rm Hess}^+(f)]\cap C(f)$ under the assumption \eqref{eq09.10.2025.1} and therefore
\[
f(p)=\sup~f\big|_{{\rm Hess}^+(f)\cap C(f)}=\max~f\big|_{{\rm Hess}^+(f)\cap C(f)}.
\]
Consider a connected component $B$ of a suitable sublevel set $f^{-1}(-\infty,q]$, containing $p$, 
which is a diffeomorphic ball for $q>f(p)$ sufficiently close to $f(p)$ along with its boundary $S$, 
a component of $f^{-1}(q)$ and a diffeomorphic sphere. The connectedness of the complement of ${\rm cl}~B$ along with the norm coercivity of $f$ 
show the nonconnectedness of the level set $f^{-1}(q)$ simply by considering a path $\gamma:[0,1]\longrightarrow \mathbb{R}^n\setminus {\rm cl}~B$
such that $f(\gamma(0))<f(p)$ and $f(\gamma(1))>q$. Since $(f(p),+\infty)$ is an interval of regular values, it follows, via the Non-Critical Neck Principle
\cite[p.194]{Palais-Terng}, that all levels of $f$ greater than $f(q)$ are nonconnected, a contradiction with \cite[Theorem 3.7]{Bro-Pi}.
\end{proof}
\begin{theorem}\label{th17.01.2026.1}
Let $f:\mathbb{R}^n\longrightarrow\mathbb{R}$ be a $C^2$-smooth truncated quasiconvex function.
If $C(f)$ and $\mathbb{R}^n\setminus {\rm Hess}^+(f)$ are bounded, then $f$ is truncated convex and the following inequalities hold ${\rm sql}(f)\leq {\rm scl}(f)\leq \max\{{\rm sql}(f), h_{\max}(f)\}$.
\end{theorem}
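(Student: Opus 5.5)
The plan is to show that for every $r\geq q_0:=\max\{{\rm sql}(f),h_{\max}(f)\}$, and indeed for every $r>q_0$, the truncation $T_r(f)=\max\{r,f\}$ is convex. This gives at once that $f$ is truncated convex and that ${\rm scl}(f)\leq q_0$. The lower bound ${\rm sql}(f)\leq{\rm scl}(f)$ is then just item \eqref{rem11.01.2026.1} of Remark \ref{rem30.01.2026.2}, since convexity of truncations implies quasiconvexity. Note that $h_{\max}(f)$ is a finite maximum: $\mathbb{R}^n\setminus{\rm Hess}^+(f)$ is closed, because ${\rm Hess}^+(f)$ is open by Sylvester's criterion, and it is bounded, hence compact. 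If it is empty, we read $h_{\max}(f)=-\infty$, and the argument below still works.

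Fix $r>q_0$. The sublevel set $K_r=f^{-1}(-\infty,r]$ is convex because $r>{\rm sql}(f)$. By the inclusion $f^{-1}(h_{\max}(f),+\infty)\subseteq{\rm Hess}^+(f)$ from Remark \ref{rem30.01.2026.2}, the open set $U_r=f^{-1}(r,+\infty)$ lies in ${\rm Hess}^+(f)$. Hence $f$ is locally strictly convex on $U_r$, and $T_r(f)=f$ there.

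Next I would verify that $g=T_r(f)$ is convex along every line segment $[a,b]$. Set $\varphi(t)=g(a+t(b-a))$. This function is continuous and equals the constant $r$ on the closed set $J=\{t: a+t(b-a)\in K_r\}$, which is an interval because $K_r$ is convex. On the complement of $J$, which consists of at most two open intervals, $\varphi$ is $C^2$ with $\varphi''>0$, since those points lie in $U_r\subseteq{\rm Hess}^+(f)$. A continuous function of this shape is convex provided its one-sided slopes at the endpoints of $J$ are compatible. Take for instance the right endpoint $t_1$ of $J$, with $J$ having nonempty interior or not. For $t>t_1$ we have $\varphi(t)>r=\varphi(t_1)$, so the right derivative of $\varphi$ at $t_1$, which exists by continuity of $f'$ along the line, is $\geq 0$. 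The left slope on $J$ is $0$. Thus the slope is nondecreasing across $t_1$, and the symmetric argument applies at the left endpoint.

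One case needs care: $J$ may be empty, or the segment may only touch $K_r$. In the empty case $\varphi$ is convex on the whole interval, since it is $C^2$ there with $\varphi''>0$. When $J$ is a single point $t_1$, $\varphi$ attains its minimum $r$ there. On each side $\varphi$ is convex, and the left derivative is $\leq0$ while the right derivative is $\geq0$, so $\varphi$ is again convex. Having checked convexity on all segments, $g$ is convex, so every $r>q_0$ gives a convex truncation. Therefore ${\rm scl}(f)\leq q_0$ by the infimum definition.

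I expect the main obstacle to be the gluing step at the boundary of $K_r$, that is, making the one-dimensional convexity criterion for a piecewise function rigorous. A continuous function on an interval is convex if it is convex on each piece and the one-sided derivatives increase at the junctions. The sign argument above supplies exactly this. The hypothesis on $C(f)$ is not needed here, except to ensure the picture is nondegenerate, consistent with Proposition \ref{prop19.10.2025.1}.
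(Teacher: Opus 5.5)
Your proposal is correct and follows essentially the paper's route. Both arguments prove convexity of $T_r(f)$ for $r>\max\{{\rm sql}(f),h_{\max}(f)\}$ segment by segment: the segment meets the convex set $f^{-1}(-\infty,r]$ in an interval, lies in ${\rm Hess}^+(f)$ elsewhere, and the pieces are glued at the junctions via the sign of the directional derivative. The difference is only in the gluing device. The paper writes $T_r(f)$ along the segment as a maximum of $r$ and tangent-line extensions of $f$, and justifies the slope signs through the normal cone of the sublevel set; you use the one-sided derivative criterion and get the signs directly from $\varphi>r$ beyond each junction, which is slightly more elementary.
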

\begin{proof}
The left hand side inequality is justified by Remark \ref{rem10.10.2025.1}\eqref{rem11.01.2026.1}.
We will show that $T_q(f)$ is convex whenever $q>\max\{{\rm sql}(f), h_{\max}(f)\}$, which ensures the inequality \[{\rm scl}(f)\leq \max\{{\rm sql}(f), h_{\max}(f)\}.\]
In this respect we first recall that $f^{-1}((-\infty,r])$ is convex for every $r\geq q$, as $f$ is truncated quasiconvex and $q>{\rm sql}(f)$, and prove that the restriction 
\[
T_q^{x,y}(f):[0,1]\longrightarrow\mathbb{R}, \ T_q^{x,y}(f)(t)=T_q(f)((1-t)x+ty)=\max\{q,f((1-t)x+ty)\}
\]
is a convex function for $q>\max\{{\rm sql}(f), h_{\max}(f)\}$ and every $x,y\in\mathbb{R}^n$. Indeed, if the entire segment $[xy]=\{(1-t)x+ty\}$ is contained in $f^{-1}(q,+\infty)$,
then \[T_q(f)((1-t)x+ty)=f((1-t)x+ty)\] for every $t\in[0,1]$. Therefore $T_q^{x,y}(f)$ is twice differentiable and
\[
 \cfrac{d^2}{dt^2}T_q(f)((1-t)x+ty)=(y-x)H_f((1-t)x+ty)(y-x)^T>0, \ \forall t\in[0,1],
\]
as $[xy]\subset {\rm Hess}^+(f)$ and $H_f((1-t)x+ty)$ is positive definite for every $t\in[0,1]$. Therefore $T_q(f)((1-t)x+ty)\leq (1-t)T_q(f)(x)+tT_q(f)(y)$ in this case.
We now consider
the case when $[xy]\cap f^{-1}(-\infty,q]\neq\emptyset$ and observe that this intersection of convex sets is a segment, 
say $[xy]\cap f^{-1}(-\infty,q]\neq\emptyset=[ab]$. If $a=x$ and $b=y$, then $x,y\in f^{-1}(-\infty,q]$ and 
$q=T_q(f)(a)=T_q(f)(b)=T_q(f)((1-t)a+tb)=(1-t)T_q(f)(a)+tT_q(f)(b)$ for all $q\in [0,1]$. 
If $x=a\in f^{-1}(-\infty,q]$ and $y\in f^{-1}(q,+\infty)$, then 
\[
T_q^{x,y}(f)(t)=\left\{
\begin{array}{cll}
q & \mbox{ if }t\in[0,t_b]\\
f((1-t)x+ty)& \mbox{ if }t\in[t_b,1],
\end{array}
\right.
\]
where $t_b\in(0,1)$ is such that $b=(1-t_b)x+t_by$, then $T_q^{x,y}(f)$ is convex, as $T_q^{x,y}(f)$ is the maximum of the constant (convex) function 
$q$ and the convex function
$C_q^{x,y}(f):[0,1]\longrightarrow\mathbb{R}$ is defined by
\[
C_q^{x,y}(f)(s)=\left\{
\begin{array}{cll}
q+\langle (\nabla f)_b,y-x\rangle(s-t_b) & \mbox{ if }s\in[0,t_b]\\
f((1-s)x+sy) & \mbox{ if }s\in[t_b,1].
\end{array}
\right.
\] 
The representation $T_q^{x,y}(f)=\max\{q,C_q^{x,y}(f)\}$ holds true as $\langle (\nabla f)_b,y-x\rangle\geq 0$. Indeed, the gradient vector 
$(\nabla f)_b$ is a normal vector to the convex set sublevel set 
$Sf(b)=f^{-1}(-\infty,f(b)]$
through $b=(1-t_b)x+t_by$, 
as it actually generates the normal cone of $Sf(b)$ at the point $b=(1-t_b)x+t_by$ (se e.g. \cite[Theorem 1.3.5, p. 245]{Uruty-Lemarechal}). 
In other words, we have
\begin{align}
\langle (\nabla f)_b,x-b\rangle\leq 0 & \Leftrightarrow\langle (\nabla f)_{(1-t_b)x+t_by},x-(1-t_b)x-t_by\rangle\leq 0 \nonumber\\ 
& \Leftrightarrow \langle (\nabla f)_{(1-t_b)x+t_by},t_b(x-y)\rangle\leq 0\nonumber\\
& \Leftrightarrow \langle (\nabla f)_{(1-t_b)x+t_by},y-x\rangle\geq 0. \nonumber
\end{align}
If $x,y\in f^{-1}(q,+\infty)$, but still $[xy]\cap f^{-1}(-\infty,q]\neq\emptyset=[ab]$, then 
\[
T_q^{x,y}(f)(t)=\left\{
\begin{array}{cll}
f((1-t)x+ty)& \mbox{ if }t\in[0,t_a]\\
q & \mbox{ if }t\in[t_a,t_b]\\
f((1-t)x+ty)& \mbox{ if }t\in[t_b,1],
\end{array}
\right.
\]
where $t_a,t_b\in[0,1]$, $t_a\leq t_b$ are such that $a=(1-t_a)x+t_ay$ and $b=(1-t_b)x+t_by$. Note that in the particular case $a=b\Leftrightarrow t_a=t_b$ 
\[
T_q^{x,y}(f)(t)=T_q^{x,y}(f)(t)=\left\{
\begin{array}{cll}
f((1-t)x+ty)& \mbox{ if }t\in[0,t_a]\\
q & \mbox{ if }t=t_a=t_b\\
f((1-t)x+ty)& \mbox{ if }t\in[t_b,1],
\end{array}
\right.=f((1-t)x+ty), \ \forall t\in[0,1].
\] 
Therefore $T_q^{x,y}(f)$ is twice differentiable and convex in this particular case, as $[0,t_a)\cup(t_a,1]\subset {\rm Hess}^+(f)$ and therefore
\[
 \cfrac{d^2}{dt^2}T_q(f)((1-t)x+ty)=(y-x)H_f((1-t)x+ty)(y-x)^T\geq 0, \ \forall t\in[0,1].
\]
If $t_a<t_b$, then $T_q^{x,y}(f)$ is still convex, as $T_q^{x,y}(f)$ is the maximum of the constant (convex) function $q$ and the convex functions
$K_q^{x,y}(f):[0,1]\longrightarrow\mathbb{R}$, $C_q^{x,y}(f):[0,1]\longrightarrow\mathbb{R}$ is defined by
\[
K_q^{x,y}(f)(s)=\left\{
\begin{array}{cll}
f((1-s)x+sy) & \mbox{ if }s\in[0,t_a]\\
q+\langle (\nabla f)_a,y-x\rangle(s-t_a) & \mbox{ if }s\in[t_a,1]
\end{array}
\right.
\] 
\[
C_q^{x,y}(f)(s)=\left\{
\begin{array}{cll}
q+\langle (\nabla f)_b,y-x\rangle(s-t_b) & \mbox{ if }s\in[0,t_b]\\
f((1-s)x+sy) & \mbox{ if }s\in[t_b,1].
\end{array}
\right.
\] 
The representation $T_q^{x,y}(f)=\max\{K_q^{x,y}(f),q,C_q^{x,y}(f)\}$ holds true as $\langle (\nabla f)_a,y-x\rangle\leq 0$ and $\langle (\nabla f)_b,y-x\rangle\geq 0$. 
While the last inequality was alredy proved before, for the first one we notice that the gradient vector 
$(\nabla f)_a$ is a normal vector to the convex set sublevel set 
$Sf(a)=f^{-1}(-\infty,f(a)]=f^{-1}(-\infty,q]$
through $a=(1-t_a)x+t_ay$, 
as it actually generates the normal cone of $Sf(a)$ at the point $a=(1-t_a)x+t_ay$ (se e.g \cite[Theorem 1.3.5, p. 245]{Uruty-Lemarechal}). 
In other words, we have
\begin{align}
\left\langle (\nabla f)_a,\left(1-\cfrac{t_a+t_b}{2}\right)x+\cfrac{t_a+t_b}{2}y-a\right\rangle\leq 0 
& \Leftrightarrow \left\langle (\nabla f)_a,\cfrac{t_a-t_b}{2}x+\cfrac{t_b-t_a}{2}y\right\rangle\leq 0. \nonumber\\
& \Leftrightarrow \cfrac{t_b-t_a}{2}\left\langle (\nabla f)_a,y-x\right\rangle\leq 0 \nonumber\\
& \Leftrightarrow \left\langle (\nabla f)_a,y-x\right\rangle\leq 0. \nonumber
\end{align}
\end{proof}
\begin{corollary}\label{cor31.10.2025.1}
Let $f:\mathbb{R}^n\longrightarrow\mathbb{R}$ be a $C^2$-smooth truncated convex function such that $\sup(f)=+\infty$. If ${\rm sql}(f)\geq h_{\max}(f)$, then ${\rm sql}(f)={\rm scl}(f)=h_{\max}(f)$.
\end{corollary}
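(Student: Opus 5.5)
The plan is to combine Theorem \ref{th17.01.2026.1} with a direct comparison between ${\rm scl}(f)$ and $h_{\max}(f)$. Implicitly, the hypotheses of Theorem \ref{th17.01.2026.1} are in force: $C(f)$ and $\mathbb{R}^n\setminus{\rm Hess}^+(f)$ are bounded, so that $h_{\max}(f)$ is defined.

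\textbf{Upper bound.} From ${\rm sql}(f)\geq h_{\max}(f)$ we get $\max\{{\rm sql}(f),h_{\max}(f)\}={\rm sql}(f)$. Theorem \ref{th17.01.2026.1} then gives
\[
{\rm sql}(f)\leq{\rm scl}(f)\leq{\rm sql}(f),
\]
hence ${\rm sql}(f)={\rm scl}(f)$.

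\textbf{Lower bound.} It remains to prove the reverse inequality $h_{\max}(f)\geq{\rm scl}(f)$. Combined with ${\rm scl}(f)={\rm sql}(f)\geq h_{\max}(f)$, this yields equality everywhere. Argue by contradiction: suppose $h_{\max}(f)<{\rm scl}(f)$ and choose $q$ with $h_{\max}(f)<q<{\rm scl}(f)$.

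By Remark \ref{rem10.10.2025.1}, $f^{-1}(h_{\max}(f),+\infty)\subseteq{\rm Hess}^+(f)$, so the Hessian is positive definite on $f^{-1}(q,+\infty)$. Also $f^{-1}(-\infty,r]$ is convex for $r\geq{\rm sql}(f)={\rm scl}(f)$.

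The goal is to show that $T_r(f)$ is already convex for some $r<{\rm scl}(f)$, contradicting minimality. The case analysis in the proof of Theorem \ref{th17.01.2026.1} shows that $T_r(f)$ is convex as soon as two conditions hold: the sublevel sets $f^{-1}(-\infty,s]$ are convex for $s\geq r$, and $f^{-1}(r,+\infty)\subseteq{\rm Hess}^+(f)$. The second condition holds for every $r>h_{\max}(f)$. So everything reduces to proving quasiconvexity of the truncation at some level below ${\rm sql}(f)$, that is, ${\rm sql}(f)\leq h_{\max}(f)$.

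\textbf{Main obstacle: showing ${\rm sql}(f)\leq h_{\max}(f)$.} The plan is to show that for $r>h_{\max}(f)$ every sublevel set $f^{-1}(-\infty,r]$ is convex. I would proceed in two steps.

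\begin{enumerate}
\item \emph{Regularity of high levels.} By Proposition \ref{prop19.10.2025.1} (using coercivity, which follows from $\sup f=+\infty$ together with convexity of the high sublevel sets), $h_{\max}(f)\geq\nu_{\max}(f)$. Thus every $r>h_{\max}(f)$ is a regular value. Moreover, the level set $f^{-1}(r)$ lies in ${\rm Hess}^+(f)$, so it is a compact hypersurface whose second fundamental form, $H_f$ restricted to the tangent space divided by $|\nabla f|$, is positive definite.
\item \emph{Convexity of each sublevel set.} Connectedness of $f^{-1}(r)$ follows as in Proposition \ref{prop19.10.2025.1} via the Non-Critical Neck Principle: all levels in $(h_{\max}(f),+\infty)$ are diffeomorphic, and large levels bound convex sets, hence are spheres. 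Hadamard's theorem then gives that $f^{-1}(r)$ bounds a strictly convex body, and this body is $f^{-1}(-\infty,r]$.
\end{enumerate}

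Consequently ${\rm sql}(f)\leq h_{\max}(f)$. Combined with the hypothesis this gives ${\rm sql}(f)=h_{\max}(f)$, and therefore ${\rm scl}(f)=h_{\max}(f)$ as well.

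The Hadamard and connectedness step is where I expect the real difficulty.
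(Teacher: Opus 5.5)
Your reduction is correct, and it is more careful than the paper, which states the corollary right after Theorem \ref{th17.01.2026.1} with no further argument. Under ${\rm sql}(f)\geq h_{\max}(f)$ that theorem only gives ${\rm sql}(f)={\rm scl}(f)$. Equality with $h_{\max}(f)$ needs the reverse inequality ${\rm sql}(f)\leq h_{\max}(f)$, which the paper obtains only in Corollary \ref{cor31.10.2025.2}, for $n\in\{2,3\}$ and coercive $f$.

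The gap is in your proof of that inequality. Your claim that coercivity follows from $\sup(f)=+\infty$ and convexity of the high sublevel sets is false. The function $g(x,y)=e^{x}+y^2$ is convex with $\sup g=+\infty$, yet $g(-t,0)\to 0$. Add a smooth bump supported in the unit disc that makes the Hessian indefinite at the origin. The resulting $f$ satisfies $T_q(f)=T_q(g)$ for $q$ large, so it is truncated convex. Both $\mathbb{R}^2\setminus{\rm Hess}^+(f)$ and $C(f)$ are nonempty and bounded, and still $f(-t,0)\to 0$. Without coercivity, several steps fail:
\begin{enumerate}
\item Proposition \ref{prop19.10.2025.1} is unavailable.
\item Level sets need not be compact.
\item The gradient flow behind the Non-Critical Neck Principle need not carry one level onto another.
\item The compact Hadamard theorem does not apply.
\end{enumerate}
In addition, boundedness of $C(f)$ is not a hypothesis of the corollary. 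For $n=1$ the levels are $S^0$, so ``spheres, hence connected'' fails.

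None of this machinery is needed. Fix $r>h_{\max}(f)$, take $a,b\in f^{-1}(-\infty,r]$ with $a\neq b$, and put $\varphi(t)=f((1-t)a+tb)$. Suppose $\varphi(t_0)>r$ for some $t_0$, and let $(t_1,t_2)$ be the component of $\{t\in[0,1] : \varphi(t)>r\}$ containing $t_0$. Then $\varphi(t_1)=\varphi(t_2)=r$. For $t\in(t_1,t_2)$ the point $(1-t)a+tb$ lies in $f^{-1}(h_{\max}(f),+\infty)\subseteq{\rm Hess}^+(f)$, so $\varphi''(t)=(b-a)H_f((1-t)a+tb)(b-a)^T>0$. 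Hence $\varphi$ is convex on $[t_1,t_2]$, which forces $\varphi(t_0)\leq r$, a contradiction.

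So every sublevel set above $h_{\max}(f)$ is convex. Since $h_{\max}(f)\geq\inf f$, this gives ${\rm sql}(f)\leq h_{\max}(f)$. Theorem \ref{th17.01.2026.1} then yields $h_{\max}(f)\leq{\rm sql}(f)\leq{\rm scl}(f)\leq\max\{{\rm sql}(f),h_{\max}(f)\}=h_{\max}(f)$. This is the same segment device used in the proof of Theorem \ref{th17.01.2026.1}. It needs no coercivity, no regularity of values, no bound on $C(f)$ and no restriction on $n$.
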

\noindent Note however that the opposite inequality ${\rm sql}(f)\leq h_{\max}(f)$ is the one which works when the convexity of the sublevel sets is controlled by the curvature of their corresponding level sets. Indeed, we have the following:
\begin{corollary}\label{cor31.10.2025.2}
Let $f:\mathbb{R}^n\longrightarrow\mathbb{R}$ be a norm-coercive $C^2$-smooth function, where $n\in\{2,3\}$.
If $\mathbb{R}^n\setminus {\rm Hess}^+(f)$ and $C(f)$ are bounded, then $f$ is truncated convex and ${\rm scl}(f)\leq h_{\max}(f)$.
\end{corollary}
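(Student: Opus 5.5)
The plan is to reduce Corollary \ref{cor31.10.2025.2} to Theorem \ref{th17.01.2026.1}. The only missing ingredient is the truncated quasiconvexity of $f$, with the explicit bound ${\rm sql}(f)\leq h_{\max}(f)$. Once this is available, Theorem \ref{th17.01.2026.1} makes $f$ truncated convex and gives
\[
{\rm scl}(f)\leq\max\{{\rm sql}(f),h_{\max}(f)\}=h_{\max}(f).
\]

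\textbf{Regularity of the high levels.} Fix $q>h_{\max}(f)$; this maximum exists because $\mathbb{R}^n\setminus{\rm Hess}^+(f)$ is closed and bounded. By Remark \ref{rem10.10.2025.1}, $f^{-1}[q,+\infty)\subseteq{\rm Hess}^+(f)$. By Proposition \ref{prop19.10.2025.1}, $q>\nu_{\max}(f)$, so $q$ is a regular value. Norm-coercivity makes $f^{-1}(q)$ a compact smooth hypersurface and $f^{-1}(-\infty,q]$ a compact domain with boundary $f^{-1}(q)$. Positive definiteness of $H_f$ on $f^{-1}(q)$ gives positive definite second fundamental form with respect to the normal $\nabla f/\|\nabla f\|$: for tangent vectors $v\perp\nabla f$ it equals $v^TH_fv/\|\nabla f\|>0$. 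So every component of $f^{-1}(q)$ is a compact, strictly (positively) curved hypersurface.

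\textbf{Connectedness of the level set.} Here $n\in\{2,3\}$ enters. I would invoke the result already used in the proof of Proposition \ref{prop19.10.2025.1}, namely \cite[Theorem 3.7]{Bro-Pi}: for a coercive function in these dimensions, levels above the critical values are connected. An alternative is a direct argument. Each component of $f^{-1}(-\infty,q]$ has a boundary of positive curvature. By Hadamard's theorem (a closed curve of positive curvature in $n=2$, or a closed surface of positive Gauss curvature in $n=3$), each boundary component bounds a convex body $K_i$ whose interior lies in $f^{-1}(-\infty,q)$. Because the normal $\nabla f$ points outward, the components of the sublevel set are exactly these convex bodies. The Non-Critical Neck Principle, applied as in Proposition \ref{prop19.10.2025.1}, shows that the number of components is constant for $q>h_{\max}(f)$. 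As $q\to+\infty$, coercivity and the boundedness of $C(f)$ together force a single component. The reason is that disjoint compact convex components would each contain a local minimum, hence a critical point of value below $q$, while the sublevel sets grow to exhaust $\mathbb{R}^n$ and must merge. Merging requires passing a critical point, and no critical values lie above $h_{\max}(f)$. Hence there is exactly one component.

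\textbf{Conclusion.} For every $r>h_{\max}(f)$, the sublevel set $f^{-1}(-\infty,r]$ is therefore a single compact set bounded by a connected, positively curved hypersurface. By Hadamard's theorem it is convex, so ${\rm sql}(f)\leq h_{\max}(f)$. Theorem \ref{th17.01.2026.1} then gives the claim.

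The main obstacle is the connectedness step: ruling out several convex components of $f^{-1}(-\infty,r]$. My first attempt would be to cite \cite{Bro-Pi} directly, exactly as in the proof of Proposition \ref{prop19.10.2025.1}. The restriction $n\in\{2,3\}$ is presumably needed to apply Hadamard's theorem cleanly in the form quoted by the author, via positivity of curvature of curves and of the Gauss curvature of surfaces.
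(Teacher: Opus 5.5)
Your argument is correct, and its skeleton matches the paper's. You establish ${\rm sql}(f)\leq h_{\max}(f)$ and then read off ${\rm scl}(f)\leq\max\{{\rm sql}(f),h_{\max}(f)\}=h_{\max}(f)$ from Theorem \ref{th17.01.2026.1}. The paper's text attributes this last step to Proposition \ref{prop19.10.2025.1}, but the inequality actually used is that of Theorem \ref{th17.01.2026.1}, as in your proof.

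The routes differ only in the first step. The paper does not argue it. It imports the fact that $f^{-1}(q)$ is a regular convex curve or surface for $q>h_{\max}(f)$ from \cite[Theorem 3.7]{Bro-Pi} ($n=2$) and from the proof of \cite[Theorem 3.4]{Pi-1} ($n=3$). That is short but opaque, and it is the only source of the restriction $n\in\{2,3\}$. You derive the step internally from four ingredients:
\begin{itemize}
\item Proposition \ref{prop19.10.2025.1}, for regularity of the levels;
\item the identity ${\rm II}(v,v)=v^TH_fv/\|\nabla f\|$ on $f^{-1}(q)\subseteq{\rm Hess}^+(f)$;
\item connectedness of the level set;
\item Hadamard's theorem.
\end{itemize}

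This buys self-containedness and also generality, since nothing you use needs $n\leq 3$. Hadamard's theorem holds for compact connected hypersurfaces with positive definite second fundamental form in every $\mathbb{R}^n$. For $n\geq 3$ the Gauss map is a covering of the simply connected $S^{n-1}$; for $n=2$ the level curves are embedded. Levels above all critical values of a coercive function are connected for every $n\geq 2$. The flow of $\nabla f/\|\nabla f\|^2$ gives $f^{-1}(q,+\infty)\cong f^{-1}(q)\times(q,+\infty)$, so each component of $f^{-1}(q,+\infty)$ is unbounded. But the complement of the compact set $f^{-1}(-\infty,q]$ has only one unbounded component. So your guess about why $n\in\{2,3\}$ appears is off, and you need neither \cite{Bro-Pi} nor the merging argument.

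Once $f^{-1}(r)$ is known to be connected, your ``outward normal'' step also becomes a one-liner. Hadamard gives a convex body $K$ with $\partial K=f^{-1}(r)$. The function $f-r$ has constant sign on ${\rm int}\,K$ and on $\mathbb{R}^n\setminus K$, and it is positive outside by coercivity. It cannot be positive inside, because points of $\partial K$ would then be global minima, hence critical. Therefore $f^{-1}(-\infty,r]=K$.
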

\begin{proof}
 The truncated quasiconvexity of $f$ follows from \cite[Theorem 3.7]{Bro-Pi} in the case $n=2$ and the proof of \cite[Theorem 3.4]{Pi-1} in the case $n=3$.
 In fact these proofs show that $f^{-1}(q)$ is a regular convex curve/surface if $q>\max\left(f\big|_{\mathbb{R}^n\setminus {\rm Hess}^+(f)}\right)=h_{\max}(f)$, namely 
 ${\rm sql}(f)\leq h_{\max}(f)$.
 The inequality ${\rm scl}(f)\leq h_{\max}(f)$ follows via Proposition \ref{prop19.10.2025.1}, as ${\rm scl}(f)\leq \max\{h_{\max}(f),{\rm sql}(f)\}=h_{\max}(f)$ follows from. 
\end{proof}
\begin{example}
${\rm sql}(f_a)={\rm scl}(f_a)=h_{\max}(f_a)=3a^4$, where \[f_a:\mathbb{R}^2\longrightarrow\mathbb{R}, \ f_a(x,y)=(x^2+y^2)^2-2a^2(x^2-y^2).\] 
Note that $f_a=d_{(a,0)}^2\cdot d_{(-a,0)}^2-a^4$, the critical set of 
$f_a$ is $C(f_a)=\{(-a,0),(0,0),(a,0)\}$ and the critical zero level set of $f_a$ is the Bernoulli lemniscate,
where $d_{(\alpha,\beta)}:\mathbb{R}^2\longrightarrow\mathbb{R}$ is defined by $d_{(\alpha,\beta)}(x,y)=\sqrt{(x-\alpha)^2+(y-\beta)^2}$. 
The other nonempty level sets are the so called Cassini's ovals.
Also $(-a,0),(a,0)$ are global minima of $f_a$ and $\min f_a=f_a(\pm a,0)=-a^4$. 
In fact $f_a$ is a Morse function and $(-a,0),(a,0)$ are the critical points of index zero and $(0,0)$ is the only critical point of index one. 
Elementary calculations \cite{PiTo} show that 
\[
{\rm Hess}^+(f_a)=\{(x,y)\in\mathbb{R}^2 | 3(x^2+y^2)^2+2a^2(x^2-y^2)>a^4\}=g_{_{\!\frac{a}{\sqrt{3}}}}^{-1}\left(\frac{a^4}{3},+\infty\right),
\]
where 
\[g_b:\mathbb{R}^2\longrightarrow\mathbb{R}, \ f_a(x,y)=(x^2+y^2)^2+2b^2(x^2-y^2).\]
The boundary of ${\rm Hess}^+(f_a)$ is the compact connected smooth regular curve
\[
\{(x,y)\in\mathbb{R}^2 | 3(x^2+y^2)^2+2a^2(x^2-y^2)=a^4\}=g_{_{\!\frac{a}{\sqrt{3}}}}^{-1}\left(\frac{a^4}{3}\right)
\]
and the open set ${\rm Hess}^+(f_a)$ is the unbounded connected component of 
\[\mathbb{R}^2\setminus\partial{\rm Hess}^+(f_a)=\mathbb{R}^2\setminus g_{_{\!\frac{a}{\sqrt{3}}}}^{-1}\left(\frac{a^4}{3}\right),\] which is therefore nonconvex.
By using the Lagrange multipliers technique one can easily see that
\[\max(f\big|_{\partial {\rm Hess}^+(f_a)})=3a^4.\] 
Therefore
$\max(f\big|_{\partial {\rm Hess}^+(f_a)})>0=\nu_{\max}(f_a)=\max(f_a\big|_{C(f_a)})$, which shows that
\[
h_{\max}(f_a)=\max\{f_a(x) \ | \ x\in\mathbb{R}^2\setminus {\rm Hess}^+(f_a)\}=\max\left\{f_a(x) \ | \ x\in g_{_{\!\frac{a}{\sqrt{3}}}}^{-1}\left(-\infty,\frac{a^4}{3}\right]\right\} =3a^4.
\]
Since the convexity of a connected sublevel set with regular boundary (i.e. regular corresponding level set) is controlled by the sign of the curvature 
of the regular corresponding level set, namely the sign of 
\begin{equation}
\left|
\begin{array}{lll}
(f_a)_{xx} & (f_a)_{xy} & (f_a)_x\\
(f_a)_{yx} & (f_a)_{yy} & (f_a)_y\\
(f_a)_x & (f_a)_y & 0
\end{array}
\right|,\label{curvature}
\end{equation}
which is, up to a multiplicative constant, equal to $3(x^2+y^2)^2-c$. In order to select the convex sublevel set one can use the Lagrange multipliers technique 
for the function $(x^2+y^2)^2$ with respect to the constraint $f_a^{-1}(c)$. Elementary calculations show that the  level set $f_a^{-1}(c)$ (Cassini's ovals) 
is regular, nonempty and its curvature does not change the sign if and only if $c\in(-a^4,0)\cup[3a^4,+\infty)$. We need to exclude the sublevel sets $f_a^{-1}(-\infty,c]$
along with their boundaries $f_a^{-1}(c)$, for $c\in(-a^4,0)$, as they are not connected.
On the other hand, the regular value $3a^4$ of $f_a$ is the first one for which the sublevel set $f_a^{-1}(-\infty,3a^4]$ is 
compact, convex and is bounded by a connected Cassini's oval $f_a^{-1}(3a^4)$, which is a connected convex regular curve. Therefore ${\rm sql}(f_a)=3a^4$ and 
${\rm sql}(f_a)={\rm scl}(f_a)=h_{\max}(f_a)=3a^4$, due to Corollary \ref{cor31.10.2025.1}.
\end{example}
\section{The injectivity of the gradient on a large overlevel set}
In this section we rely on the global injectivity criterion by Gale-Nikaid\^ o \cite{GaleNikaido} which  ensures the global injectivity of  $F :  D\longrightarrow \mathbb{R}^n$, where $D\subseteq \mathbb{R}^n$ is a convex open set,
under the positive definiteness of its Fr\^ echet differentials $(dF)_x$, $x \in D$, namely
\begin{equation}
\label{eq31.01.2026.1}
\langle (dF)_x(y),y\rangle > 0, \ \forall x \in D, \ y\in \mathbb{R}^n\setminus\{0\}.\end{equation}
In the case of the gradient $F:=\nabla f$ of a $C^2$-smooth function $f: \mathbb{R}^n \longrightarrow \mathbb{R}^n$, the condition \eqref{eq31.01.2026.1} is the positive definitness of the hessian matrix $H_f$ of $f$.
\begin{remark}
Let $f:\mathbb{R}^n\longrightarrow\mathbb{R}$ be a $C^2$-smooth function such that $\mathbb{R}^n={\rm Hess}^+(f)$, i.e. $H_f(x)$ is positive definite
for all $x\in\mathbb{R}^n$. Then $f$ is strictly convex and its gradient $\nabla f:\mathbb{R}^n\longrightarrow\mathbb{R}^n$ is one-to-one. 
\end{remark}
While the proof of the Gale-Nikaid\^ o criterion uses the convexity of the domain $D$ of $F$, the ${\rm Hess}^+(f)$ region might be non-convex and therefore the restriction of $\nabla f$ to ${\rm Hess}^+(f)$ might not be injective. The restriction of the gradient $\nabla f$ to a slightly smaller set is injective.
\begin{theorem}\label{Th15.01.2026.1}
Let $f:\mathbb{R}^n\longrightarrow\mathbb{R}$ be a $C^2$-smooth truncated convex function such that ${\rm scl}(f)\in {\rm Im}(f)$ is
a regular value of $f$ and $f^{-1}({\rm scl}(f),+\infty)\subseteq{\rm Hess}^+(f)$. 
Then the restriction \[\nabla f\big|_{f^{-1}({\rm scl}(f),+\infty)}:f^{-1}({\rm scl}(f),+\infty)\longrightarrow\mathbb{R}^n\]
is one-to-one.
\end{theorem}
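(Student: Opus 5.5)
The plan is to prove injectivity with the convexity of the truncation $T_{{\rm scl}(f)}(f)$ and the subgradient description in Proposition \ref{prop27.03.2021.1}; no Gale--Nikaid\^o type argument on the possibly nonconvex domain is needed. Write $s:={\rm scl}(f)$, $g:=T_s(f)=\max\{s,f\}$ and $U:=f^{-1}(s,+\infty)$. By Proposition \ref{theSCLachived}, which applies since $f$ is continuous, $g$ is convex. By \eqref{eq31.10.2025.1} of Proposition \ref{prop27.03.2021.1}, whose hypotheses are exactly those of the statement, $\partial g(x)=\{(\nabla f)_x\}$ for every $x\in U$. Alternatively one can argue directly: $g$ coincides with $f$ on the open set $U$, so at $x\in U$ the convex function $g$ is differentiable with gradient $(\nabla f)_x$.

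Suppose, for a contradiction, that $x\neq y$ lie in $U$ and $(\nabla f)_x=(\nabla f)_y=:v$. The subgradient inequality at $x$ and at $y$ gives
\[
g(y)\geq g(x)+\langle v,y-x\rangle \quad\mbox{and}\quad g(x)\geq g(y)+\langle v,x-y\rangle ,
\]
hence $g(y)-g(x)=\langle v,y-x\rangle$. Now take $z_t=(1-t)x+ty$ with $t\in[0,1]$. The subgradient inequality at $x$ gives the lower bound $g(z_t)\geq g(x)+t\langle v,y-x\rangle$. Convexity gives the upper bound $g(z_t)\leq (1-t)g(x)+tg(y)=g(x)+t\langle v,y-x\rangle$. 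Therefore $t\mapsto g(z_t)$ is affine on $[0,1]$.

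Next I will locate the segment. Since $t\mapsto g(z_t)$ is affine with endpoint values $g(x)=f(x)>s$ and $g(y)=f(y)>s$, it stays strictly above $s$ on $[0,1]$. Because $g>s$ forces $g=f$, every $z_t$ satisfies $f(z_t)>s$, so the whole segment $[xy]$ lies in $U$. By hypothesis $U\subseteq{\rm Hess}^+(f)$, so along the segment $g=f$ is $C^2$ and
\[
\frac{d^2}{dt^2}f(z_t)=(y-x)H_f(z_t)(y-x)^T>0 \quad\mbox{for all } t\in[0,1],
\]
using $y\neq x$. This contradicts the affinity of $t\mapsto f(z_t)$, so $\nabla f$ is injective on $U$.

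The only delicate step is the identification $\partial g(x)=\{(\nabla f)_x\}$ on $U$, that is, that $(\nabla f)_x$ is a genuine global subgradient of the convex truncation. This is exactly what Proposition \ref{prop27.03.2021.1} provides, and it is also where the attainment of ${\rm scl}(f)$ (Proposition \ref{theSCLachived}) enters. If one preferred to avoid $s$ itself, the same argument would run with $T_r(f)$ for $r$ slightly above $s$ but below $\min\{f(x),f(y)\}$, which removes any dependence on the regular-value hypothesis.
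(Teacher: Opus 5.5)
Your proof is correct, and it streamlines the paper's argument rather than reproducing it.

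The paper proceeds in three steps:
\begin{enumerate}
\item It invokes Rockafellar's maximal monotonicity theorem to know that the fiber $(\partial T_{{\rm scl}(f)}(f))^{-1}(u)$ is convex, hence contains $[x_1x_2]$.
\item It uses the Gale--Nikaid\^o criterion (injectivity of $\nabla f$ on convex subsets of $f^{-1}({\rm scl}(f),+\infty)$) to infer that the segment must meet $f^{-1}(-\infty,{\rm scl}(f)]$.
\item It gets a contradiction from the local injectivity of $\nabla f$ at $x_1$, because $\nabla f\equiv u$ on an initial piece of the segment.
\end{enumerate}

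Your two subgradient inequalities are precisely the elementary proof that this fiber is convex, i.e.\ that $g$ is affine with slope $v$ along $[xy]$. Your observation that an affine function with endpoint values above $s$ stays above $s$ then places the whole segment in $f^{-1}(s,+\infty)$. So the configuration the paper extracts from Gale--Nikaid\^o never actually occurs. Strict convexity of $t\mapsto f(z_t)$ finishes the proof with no appeal to Rockafellar, Gale--Nikaid\^o, or the inverse function theorem.

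Your route also shows that some hypotheses are superfluous. Working with $T_r(f)$ for ${\rm scl}(f)<r<\min\{f(x),f(y)\}$, the regular-value assumption plays no role. Attainment of ${\rm scl}(f)$ is automatic anyway, since $T_{{\rm scl}(f)}(f)$ is the pointwise limit of the convex functions $T_r(f)$ as $r\searrow{\rm scl}(f)$. Moreover, since $\frac{d^2}{dt^2}f(z_t)\equiv 0$ on $[0,1]$, positive definiteness of $H_f$ at a single point of $[xy]$ already gives the contradiction.

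What the paper's route offers in exchange is the language of fibers of the maximal monotone operator $\partial T_{{\rm scl}(f)}(f)$ and of CIP maps, which it reuses in its closing discussion of valences.
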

\begin{proof} We first observe that $T_{{\rm scl}(f)}(f):\mathbb{R}^n\longrightarrow\mathbb{R}$ is a continuous convex function and 
\[T_{{\rm scl}(f)}(f)\big|_{f^{-1}[{\rm scl}(f),+\infty)}=f\big|_{f^{-1}[{\rm scl}(f),+\infty)},\]
showing that $\partial T_{\rm scl}(f)(x)=\{(\nabla f)_{x}\}$ for every $x\in f^{-1}({\rm scl}(f),+\infty)$.
According to Rockafellar \cite[Theorem A]{Rockafellar}, the subdifferential operator $\partial T_{\rm scl}(f):\mathbb{R}^n\longrightarrow\mathbb{R}^n$
is maximal monotone. Equivalently, its inverse $\partial T_{\rm scl}(f)^{-1}$ is also maximal monotone and the inverse images $\partial T_{\rm scl}(f)^{-1}(y)$,
$y\in\mathbb{R}^n$ are all convex sets \cite[p. 340]{Rockafellar}, \cite[p. 3, 9, 16]{Phelps}.
Since $H_f(x)$ is positive definite for every $x\in f^{-1}({\rm scl}(f),+\infty)$, it follows that
$\nabla f\big|_{f^{-1}({\rm scl}(f),+\infty)}$ is a CIP function, i.e. $\nabla f\big|_C$ is one-to-one for every convex subset $C$ of 
$f^{-1}({\rm scl}(f),+\infty)$. Indeed the positive definiteness of the Hessian  $H_f(x)$ is equivalent with the Gale-Nikaido \cite{GaleNikaido} condition 
\[
 \langle (d\nabla f)_x(y),y\rangle>0,\mbox{ for all }y\in\mathbb{R}^n\setminus\{0\},
\]
for the gradient $\nabla f$,
which ensures the injectivity of $\nabla f$ on every convex set $C\subseteq f^{-1}({\rm scl}(f),+\infty)$.
Assume that $(\nabla f)_{x_1}=(\nabla f)_{x_2}$ for some $x_1, x_2\in f^{-1}({\rm scl}(f),+\infty)$ and observe that
$x_1,x_2\in (\partial T_{\rm scl}(f))^{-1}(u)$, where $u$ stands for  $(\nabla f)_{x_1}=(\nabla f)_{x_2}$, 
as $\partial T_{\rm scl}(f)(x)=\{(\nabla f)_{x}\}$ for every $x\in f^{-1}({\rm scl}(f),+\infty)$.
Since $\nabla f\big|_{f^{-1}({\rm scl}(f),+\infty)}$ is a CIP function, we deduce, via the convexity of $(\partial T_{\rm scl}(f))^{-1}(u)$ along with
$\partial T_{\rm scl}(f)(x)=\{(\nabla f)_{x}\}$ for every $x\in f^{-1}({\rm scl}(f),+\infty)$,
that 
\begin{equation}
[x_1x_2]\cap f^{-1}(-\infty,{\rm scl}(f)]\neq\emptyset\mbox{ and }[x_1x_2]\subseteq 
(\partial T_{\rm scl}(f))^{-1}(u), \label{eq17.01.2026.1}
\end{equation}
where $[x_1x_2]:=\{u_t:=(1-t)x_1+tx_2 \ | \ t\in[0,1]\}$. We also consider the closed-open segments $[x_1x_2):=\{u_t:=(1-t)x_1+tx_2 \ | \ t\in[0,1)\}$ along 
with $[x_2x_1):=\{v_t:=(1-t)x_2+tx_1 \ | \ t\in[0,1)\}$ and observe that there exists $0<\varepsilon<1$ such that 
\[[x_1u_s), \ [x_2v_s)\subset f^{-1}({\rm scl}(f),+\infty), \ \forall s\in[0,\varepsilon)\] as $x_1,x_2\in f^{-1}({\rm scl}(f),+\infty)$ 
and $f^{-1}({\rm scl}(f),+\infty)$ is an open set. This shows that 
\[
(\partial T_{\rm scl}(f))(u_s)=\{(\nabla f)_{u_s}\}, \ \forall s\in[0,\varepsilon) 
\]
\[
(\partial T_{\rm scl}(f))(v_s)=\{(\nabla f)_{v_s}\}, \ \forall s\in[0,\varepsilon).
\]
By using the second inclusion of \eqref{eq17.01.2026.1} one gets
$[x_1u_s), [x_2v_s) \subseteq [x_1x_2]\subseteq (\partial T_{\rm scl}(f))^{-1}(u)$,
for all $s\in[0,\varepsilon)$. This shows that 
\[
u=(\nabla f)_{x_1}\in (\partial T_{\rm scl}(f))(u_s)=\{(\nabla f)_{u_s}\}, \ \forall s\in[0,\varepsilon) 
\]
\[
u=(\nabla f)_{x_2}\in (\partial T_{\rm scl}(f))(v_s)=\{(\nabla f)_{v_s}\}, \ \forall s\in[0,\varepsilon) 
\]
and contradicts the local injectivity of $\nabla f$ both in $x_1$ and $x_2$.
\end{proof}
\begin{remark}
Let $f:\mathbb{R}^n\longrightarrow\mathbb{R}$ be a $C^2$-smooth truncated convex function. If ${\rm sql}(f)\geq h_{\max}(f)$, 
then ${\rm sql}(f)={\rm scl}(f)=h_{\max}(f)$, due to Corollary \ref{cor31.10.2025.1}, and the following relations hold
\[f^{-1}({\rm scl}(f),+\infty)=f^{-1}\left(h_{\max}(f),+\infty\right)\subseteq {\rm Hess}^+(f)\]
whenever $\mathbb{R}^n\setminus {\rm Hess}^+(f)$ is bounded. In particular $f^{-1}({\rm scl}(f_a),+\infty)\subseteq {\rm Hess}^+(f_a)$.
One can therefore ask whether the injectivity domain of such a function can be extended from $f^{-1}({\rm scl}(f),+\infty)$ to ${\rm Hess}^+(f)$.
This is obviously not the case for such functions which are additionally Morse functions with multiple local minimum points (critical points of index zero) as these points 
are all contained in ${\rm Hess}^+(f)$ and the gradient vanishes at all these local minima points.
\end{remark}
\begin{remark}
Let $f:\mathbb{R}^n\longrightarrow\mathbb{R}$ be a $C^2$-smooth truncated convex function.
Then the restriction $\nabla f\big|_{f^{-1}[{\rm scl}(f),+\infty)}:f^{-1}[{\rm scl}(f),+\infty)\longrightarrow\mathbb{R}^n$
is monotone and $\langle (\nabla f)_x,x-y\rangle\geq 0$ for every $x\in f^{-1}[{\rm scl}(f),+\infty)$ and every $y\in f^{-1}(-\infty,{\rm scl}(f))$. 
Indeed, $(\nabla f)_x\in \partial T_{\rm scl}(f)$ for every $x\in f^{-1}[{\rm scl}(f),+\infty)$ due to Proposition \ref{prop27.03.2021.1} and obviously 
$\partial T_{\rm scl}(f)=\{0\}$ for every $y\in f^{-1}(-\infty,{\rm scl}(f))$. The first statement follows by using the monotonicity of
the subdifferential operator $\partial T_{\rm scl}(f)$ of the convex function $T_{\rm scl}(f)$ on $f^{-1}[{\rm scl}(f),+\infty)$ and the second one by using the same monotonicity 
with $x\in f^{-1}[{\rm scl}(f),+\infty)$ and $y\in f^{-1}(-\infty,{\rm scl}(f))$.
\end{remark}
\begin{example}\label{ex05.02.2026.1}
For the particular function $f_a:\mathbb{R}^2\longrightarrow\mathbb{R}, \ f_a(x,y)=(x^2+y^2)^2-2a^2(x^2-y^2)$, the overlevel set $f_a^{-1}({\rm scl}(f_a),+\infty)=f_a^{-1}(3a^4,+\infty)$ is the largest one with the property that the restriction 
\[\nabla f_a\big|_{f_a^{-1}(3a^4,+\infty)}:f_a^{-1}(3a^4,+\infty)\longrightarrow\mathbb{R}^2\]
is one-to-one. Indeed $f_a$ satisfy the hypothesis of Theorem \ref{Th15.01.2026.1} and ${\rm scl}(f_a)=3a^4$ as well as $\min(f_a)=-a^4$. Moreover, for $k,c\in[-a^4,3a^4)$, $k<c$ one can easily see that 
\[
\nabla f_a\left(\pm\cfrac{\sqrt{3a^4-c}}{2a},\cfrac{\sqrt{c+a^4}}{2a}\right)=4a(0,\sqrt{c+a^4}),
\]
which shows that the restriction 
of the gradient $\nabla f_a$ to the larger overlevel set $f_a^{-1}(k,+\infty)$ is no longer one-to-one for  $k\in[-a^4,3a^4)$, as
\[
 f_a\left(\pm\cfrac{\sqrt{3a^4-c}}{2a},\cfrac{\sqrt{c+a^4}}{2a}\right)=c>k, \mbox{ i.e. } \left(\pm\cfrac{\sqrt{3a^4-c}}{2a},\cfrac{\sqrt{c+a^4}}{2a}\right)\in f_a^{-1}(k,+\infty).
\]
\end{example}
\begin{remark}\label{Rem15.01.2026.1}
The global injectivity of a nonconvex $C^2$-smooth function $f:\mathbb{R}^n\longrightarrow\mathbb{R}$ with multiple critical points satisfying the requirements of 
Theorem \ref{Th15.01.2026.1} fails as the valence of its gradient is at least the cardinality of the critical set $C(f)$ of $f$. Indeed,
the gradient $\nabla f$ vanishes on $C(f)$. Recall that the valence ${\rm Val}(F)$ of  $F : D \longrightarrow \mathbb{R}^n$, as defined in \cite{Neumann}, 
is \[{\rm Val}(F):=\sup\{{\rm card}~F^{-1}(y) : y \in \mathbb{R}^n \}.\]
\end{remark}
\section{Final remarks and open questions}
This section is devoted to open problems related with this work, which are meant to improve some statements or remove some, possibly, redundant hypotheses. 

We only justified Remark \ref{rem30.01.2026.2}\eqref{rem30.01.2026.3} for $C^2$-smooth functions. The question is whether this remark still works for less regular functions such as for $C^1$-smooth functions or even for continuous functions.
\begin{problem}\label{Prob30.01.2026.0}
Does ${\rm sql}(f)$ measures the function's deviation from quasiconvexity, in the sense of Remark \ref{rem30.01.2026.2}\eqref{rem30.01.2026.3}, for less regular functions such as for $C^1$-smooth functions or even for continuous functions?
\end{problem}
Although we justified the inequality ${\rm sql}(f)\leq h_{\max}(f)$ in the proof of Corollary \ref{cor31.10.2025.2} for $C^2$-smooth  functions 
$f:\mathbb{R}^n\longrightarrow\mathbb{R}$, $n\in\{2,3\}$ such that $\mathbb{R}^n\setminus {\rm Hess}^+(f)$ and $C(f)$ are bounded, we are still 
concerned with the opposite inequality, a hypothesis of Corollary \ref{cor31.10.2025.1}, for $C^2$-smooth truncated convex functions.
\begin{problem}\label{Prob15.01.2026.0}
If $f:\mathbb{R}^n\longrightarrow\mathbb{R}$ is a $C^2$-smooth truncated quasiconvex function such that $C(f)$ and $\mathbb{R}^n\setminus {\rm Hess}^+(f)$ are bounded, then it would be interesting to investigate the inequality ${\rm sql}(f)\geq h_{\max}(f)$. This inequality would produce, due to Theorem \ref{th17.01.2026.1}, 
one further equality, namely ${\rm sql}(f)= {\rm scl}(f)=h_{\max}(f)$.
\end{problem}

In several statements we combined the boundedness hypotheses of the critical set $C(f)$ and $\mathbb{R}^n\setminus {\rm Hess}^+(f)$, associated to a norm-coercive $C^2$-smooth function $f:\mathbb{R}^n\longrightarrow\mathbb{R}$. We wonder however whether the boundedness of the critical set and even the norm-coercivity of $f$ are redundant hypotheses in those statements. 
\begin{problem}\label{Prob01.02.2026.1}It would be interesting to investigate whether the boundedness of  the complement $\mathbb{R}^n\setminus {\rm Hess}^+(f)$, associated to a norm coercive $C^2$-smooth function $f:\mathbb{R}^n\longrightarrow\mathbb{R}$, implies the boundedness of the critical set $C(f)$ of $f$. Can the norm coercivity be also removed from the list of hypothesis and still get the boundedness of $C(f)$ just out of the boundedness of the complement  $\mathbb{R}^n\setminus {\rm Hess}^+(f)$?
\end{problem}
On one hand Theorem \ref{Th15.01.2026.1} provides an overlevel set of a function as an injectivity domain for the function's gradient, and, on the other hand, Example \ref{ex05.02.2026.1} shows that the corresponding overlevel set of the particular function $f_a$ is the largest one with the injectivity property for its gradient $\nabla f_a$.
\begin{problem}
It would be interesting to identify the extra properties needed for a function    $f:\mathbb{R}^n\longrightarrow\mathbb{R}$, subject to the hypotheses of Theorem \ref{Th15.01.2026.1}, to conclude that $f^{-1}({\rm scl}(f),+\infty)$ is the largest overlevel set on which its gradient $\nabla f$ is one-to-one.
\end{problem}
If we remove $f^{-1}({\rm scl}(f),+\infty)$ out of ${\rm Hess}^+(f)$, the remaining part 
can still be investigated from the injectivity point of view of $\nabla f$, but we can only recall now that the restriction of $\nabla f$ to this part is injective on every convex subset of ${\rm Hess}^+(f)\setminus f^{-1}({\rm scl}(f),+\infty)$. As we have already seen in Theorem \ref{Th15.01.2026.1}, 
the gradient $\nabla f$ is injective on the quite large subset $f^{-1}({\rm scl}(f),+\infty)$ of 
${\rm Hess}^+(f)$. Now the question is what happens on 
the remaining part of ${\rm Hess}^+(f)$ for a $C^2$-smooth function $f:\mathbb{R}^n\longrightarrow\mathbb{R}$ subject to the requirements of Theorem \ref{Th15.01.2026.1}. 
\begin{problem}\label{Prob15.01.2026.1} Let $f:\mathbb{R}^n\longrightarrow\mathbb{R}$ 
be a nonconvex $C^2$-smooth truncated convex Morse function such that $C(f)$, $\mathbb{R}^m\setminus{\rm Hess}^+(f)$ are bounded and ${\rm scl}(f)\in {\rm Im}(f)$ is
a regular value of $f$ as well as $f^{-1}({\rm scl}(f),+\infty)\subseteq{\rm Hess}^+(f)$.
It would be interesting to investigate whether the restrictions of the gradient $\nabla f$, to the connected components of
${\rm Hess}^+(f)\setminus f^{-1}({\rm scl}(f),+\infty)$ are one-to-one.
\end{problem}
An affirmative answer towards the Problem \ref{Prob15.01.2026.1}, combined with Theorem \ref{Th15.01.2026.1} and Remark \ref{Rem15.01.2026.1}, would 
produce the upperbound \[{\rm card}~\pi_0({\rm Hess}^+(f)\setminus f^{-1}({\rm scl}(f),+\infty))+1\] for the valence of the restricted gradient 
$\nabla f\big|_{{\rm Hess}^+(f)}$, where $\pi_0(X)$ stands 
for the collection of all connected components of the topological space $X$. In other words an affirmative answer towards the Problem \ref{Prob15.01.2026.1}
would produce the upper-bound below for the valence of $\nabla f\big|_{{\rm Hess}^+(f)}$
\begin{equation}\label{eq15.01.2026.2}
{\rm card}~C(f\big|_{{\rm Hess}^+(f)})\leq {\rm Val}(\nabla f\big|_{{\rm Hess}^+(f)})\leq{\rm card}~\pi_0({\rm Hess}^+(f)\setminus f^{-1}({\rm scl}(f),+\infty))+1,
\end{equation}
while the lower-bound is obvious and was mentioned before in Remark \ref{Rem15.01.2026.1}. Note that the above upper-bound for the valence of the gradient restricted 
to ${\rm Hess}^+(f)$ comes with the observation that every connected component of ${\rm Hess}^+(f)\setminus f^{-1}({\rm scl}(f),+\infty)$ contains 
at most one critical point of $f$. This is the case for $f_a$. Note however that an affirmative answer to Problem \ref{Prob15.01.2026.0} leads to the conclusion that 
the connected component $f^{-1}({\rm scl}(f),+\infty)$ of 
${\rm Hess}^+(f)\setminus f^{-1}({\rm scl}(f),+\infty)$ contains no critical points of $f$. Therefore, an affirmative answer towards Problems \ref{Prob15.01.2026.0}
and \ref{Prob15.01.2026.1} imply
\[{\rm card}~C(f\big|_{{\rm Hess}^+(f)})\leq {\rm card}~\pi_0({\rm Hess}^+(f)\setminus f^{-1}({\rm scl}(f),+\infty)).\]
\begin{problem}\label{Prob15.01.2026.2}
 Let $f:\mathbb{R}^n\longrightarrow\mathbb{R}$ 
be a nonconvex $C^2$-smooth truncated convex function such that ${\rm scl}(f)\in {\rm Im}(f)$ is
a regular value of $f$ and $f^{-1}({\rm scl}(f),+\infty)\subseteq{\rm Hess}^+(f)$. Is it true that 
\begin{equation}\label{eq15.01.2026.30}
{\rm Val}(\nabla f\big|_{{\rm Hess}^+(f)})={\rm card}~C(f\big|_{{\rm Hess}^+(f)})={\rm card}~\pi_0({\rm Hess}^+(f)\setminus f^{-1}({\rm scl}(f),+\infty))?
\end{equation}
\end{problem}
\begin{remark}
The restriction $f\big|_{{\rm Hess}^+(f)}$ is a Morse function with only critical points of index zero (local minima). 
Therefore $C(f\big|_{{\rm Hess}^+(f)})=C(f)\cap {\rm Hess}^+(f)$ is a discrete set. A similar question involving ${\rm Val}(\nabla f)$ and ${\rm card}~C(f)$ 
is only interesting when the entire critical set $C(f)$ is discrete (e.g. when $f$ is a Morse function), as otherwise the set 
$C(f)\cap(\mathbb{R}^n\setminus {\rm Hess}^+(f))$ might be infinite 
uncountable and therefore ${\rm Val}(\nabla f)$ is also infinite uncountable in this case.
\end{remark}
\begin{problem}\label{Prob15.01.2026.3}
 Let $f:\mathbb{R}^n\longrightarrow\mathbb{R}$ 
be a nonconvex $C^2$-smooth truncated convex Morse function such that ${\rm scl}(f)\in {\rm Im}(f)$ is
a regular value of $f$ and $f^{-1}({\rm scl}(f),+\infty)\subseteq{\rm Hess}^+(f)$. Is it true that 
\begin{equation}\label{eq15.01.2026.3}
{\rm Val}(\nabla f)={\rm card}~C(f)={\rm card}~\pi_0({\rm Hess}^+(f)\setminus f^{-1}({\rm scl}(f),+\infty))+1?
\end{equation}
\end{problem}
In the particular case of the function $f_a:\mathbb{R}^2\longrightarrow\mathbb{R}$ the sets ${\rm Hess}^+(f_a)$ along with its complement 
$\mathbb{R}^2\setminus {\rm Hess}^+(f_a)$ and $f_a^{-1}({\rm scl}(f_a),+\infty))=f_a^{-1}(3a^4,+\infty))={\rm ext}f_a^{-1}(3a^4)$ along with the components $C_1, C_2$ 
of ${\rm Hess}^+(f_a)\setminus f_a^{-1}({\rm scl}(f_a),+\infty))={\rm Hess}^+(f_a)\setminus f_a^{-1}(3a^4,+\infty))$
are suggested by the figure below. Therefore the inequalities \eqref{eq15.01.2026.2}, in the particular case of $f_a$, become 
$2\leq {\rm Val}(\nabla f\big|_{{\rm Hess}^+(f_a)})\leq 3$, as $C(f_a\big|_{{\rm Hess}^+(f_a)})=\{(-a,0),(a,0)\}$. 
The injectivity of $\nabla f_a\big|_{\mathbb{R}^2\setminus {\rm Hess}^+(f_a)}$ would further produce the following bouunds
$3\leq {\rm Val}(\nabla f_a)\leq 4$. Note that $\nabla f_a\big|_{\mathbb{R}^2\setminus {\rm Hess}^+(f_a)}$ is a local diffeomorphism 
as its Jacobian matrix, i.e. the hessian matrix of $f_a\big|_{\mathbb{R}^2\setminus {\rm Hess}^+(f_a)}$, is everywhere nonsingular. 
\begin{figure}[ht]
\includegraphics[width=14cm]{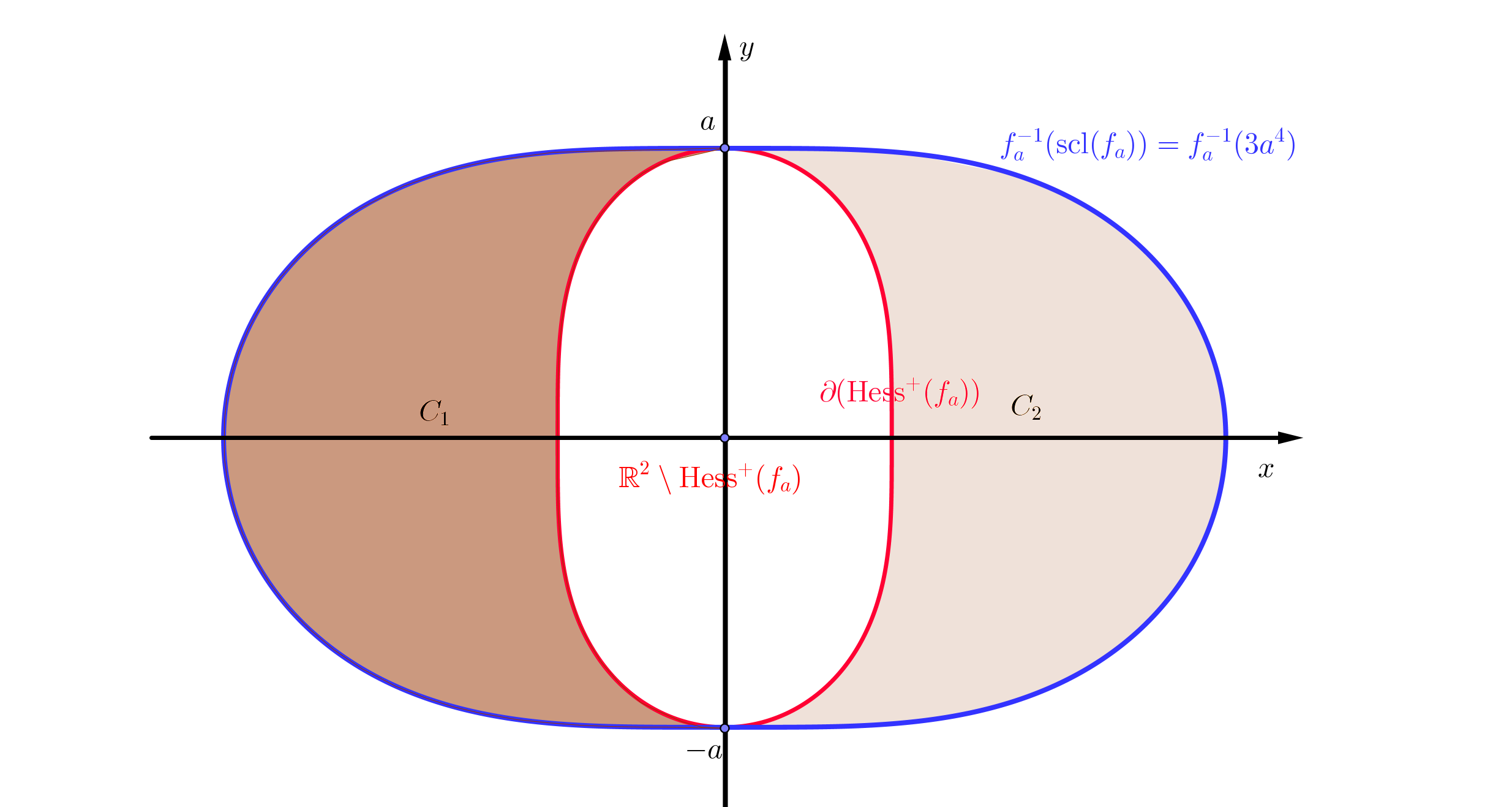}
\caption{}
\end{figure}\\
\noindent On the other hand, once we consider an open subset of $\mathbb{R}^n$,
with nonempty intersection both with ${\rm Hess}^+(f)$ and its complement  $\mathbb{R}^n\setminus{\rm Hess}^+(f)$, the failure of the injectivity of $\nabla f$ is quite high 
as we shall see in the particular case of $f_a$.\\
\noindent We consider the following subsets of $\mathbb{R}^2$ associated to the function $f_a$:
\begin{align}
C^-:=\{(x,y)\in\mathbb{R}^2| \ x<0 \mbox{ and }-a^4\leq f_a(x,y)<0\} \\
C^+:=\{(x,y)\in\mathbb{R}^2| \ x>0 \mbox{ and }-a^4\leq f_a(x,y)<0\} 
\end{align}
whose intersections with ${\rm Hess}^+(f_a)$ and its complement  $\mathbb{R}^n\setminus{\rm Hess}^+(f_a)$ are both nonempty.
\begin{remark}
The restrictions $f_a\big|_{C^{\pm}}$ are not injective as shows the variation of the functions $\|\nabla f_a\big|_{C^{\pm}}\|^2(t,0)=16(t^6-2a^2t^4+a^4 t^2)$
on the intervals $[-a,0)$ and $(0,a]$, via their first order derivative \[\frac{d}{dt}(\|\nabla f_a\big|_{C^{\pm}}\|^2)(t,0))(t,0)=32t(3t^4-4a^2t^2+a^4),\]
as the interval $\{(t,o) \ | \ t\in[-a,0)\}$ is transversal to all level sets of the restriction $f_a\big|_{C^{-}}$ and the interval $\{(t,o) \ | \ t\in(0,a]\}$ 
is transversal to all level sets of $f_a\big|_{C^{+}}$. Since $\nabla f_a$ never vanishes on these level curves, except at the trivial level sets consisting in the 
singleton minimum points $(\pm a,0)$, the normalized gradient vector field \[\cfrac{\nabla f_a}{\|\nabla f_a\|}\]
realizes a diffeomorphism between each nontrivial level curve and the circle $S^1$, while the image of such a level curve through the gradient $\nabla f_a$ 
is an embedded circle surrounding the origin. By fixing such a nontrivial level cuve $\Gamma_k=(f_a\big|_{C^{-}})^{-1}(k)$ for $k\in(-a^4,0)$ of  $f_a\big|_{C^{-}}$ 
we first observe that \[\max\|\nabla f_a\big|_{\Gamma_k}\|^2=16a^4(a^2+\sqrt{a^4+k}).\]
On the other hand $(-\sqrt{a^2+\sqrt{a^4+c}},0),(-\sqrt{a^2-\sqrt{a^4+c}},0)\in\Gamma_c$ and for a suitable choice $k\in(-a^4,0)$ we have 
\begin{align}
& \|\nabla f_a\|^2(-\sqrt{a^2+\sqrt{a^4+c}},0))>\max\|\nabla f_a\big|_{\Gamma_k}\|^2\nonumber\\
& \|\nabla f_a\|^2(-\sqrt{a^2-\sqrt{a^4+c}},0)<\min\|\nabla f_a\big|_{\Gamma_k}\|^2,\nonumber
\end{align}
as
\begin{align}
& \|\nabla f_a\|^2(-\sqrt{a^2+\sqrt{a^4+c}},0))=16a^4(a^2+\sqrt{a^4+c})\stackrel{as \ c\nearrow 0}{\longrightarrow}32a^6\nonumber\\
& \|\nabla f_a\|^2(-\sqrt{a^2-\sqrt{a^4+c}},0))=16a^4(a^2-\sqrt{a^4+c})\stackrel{as \ c\nearrow 0}{\longrightarrow}0.\nonumber
\end{align}
This shows that $\Gamma_c\cap \Gamma_k$ consists in two points at least. One can similarly show that $\Gamma_c'\cap \Gamma_k'$ consists in two points at least, where 
$\Gamma_k'=(f_a\big|_{C^{+}})^{-1}(k)$ for $k\in(-a^4,0)$. These facts show that the valence of $\nabla f_a$ is at least two.
\end{remark}
\begin{proposition}
The sets $C^{\pm}$ are open convex and the restrictions $f_a\big|_{C^{\pm}}$ are quasiconvex.
\end{proposition}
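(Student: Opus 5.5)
The plan is to reduce everything to the convexity of the sublevel sets of $f_a$ in the open half-planes $\{x<0\}$ and $\{x>0\}$. By the symmetry $f_a(-x,y)=f_a(x,y)$ it suffices to treat $C^-$.

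\textbf{Step 1: $C^-$ is open.} Since $\min f_a=-a^4$, the condition $-a^4\leq f_a$ is automatic. Hence $C^-=\{x<0\}\cap f_a^{-1}(-\infty,0)$ is an intersection of two open sets, so it is open. Moreover $f_a(0,y)=y^4+2a^2y^2\geq 0$, so the set $f_a^{-1}(-\infty,0)$ does not meet the axis $\{x=0\}$. Therefore $f_a^{-1}(-\infty,0)=C^-\sqcup C^+$, and these are the interiors of the two lobes of the Bernoulli lemniscate $f_a^{-1}(0)$.

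\textbf{Step 2: the sublevel sets inside $C^-$ are convex.} For $c\in(-a^4,0)$ set
\[
K_c:=\{(x,y)\ | \ x<0,\ f_a(x,y)\leq c\},
\]
the left component of the sublevel set bounded by a Cassini oval. The only critical point of $f_a$ in $\{x<0,\ f_a<0\}$ is the minimum $(-a,0)$, since $(0,0)$ lies on the level $0$. Hence, by the Morse lemma and the Non-Critical Neck Principle already used in Proposition \ref{prop19.10.2025.1}, $K_c$ is a compact topological disk whose boundary $\Gamma_c$ is a regular closed curve. To get convexity I would use the curvature determinant \eqref{curvature}. As recalled in the Example, up to a positive factor it equals $3(x^2+y^2)^2-c$, which is strictly positive when $c<0$. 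So $\Gamma_c$ is a simple closed regular curve with nowhere vanishing curvature of constant sign. By Hadamard's theorem on ovals it is the boundary of a convex set, so $K_c$ is convex.

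As an alternative, and as a check on the sign, one can work in polar coordinates. There
\[
r^2=a^2\cos 2\theta\pm\sqrt{a^4\cos^2 2\theta+c},
\]
and the left oval lies in a sector around $\theta=\pi$. Convexity of the region between the two branches can then be verified directly.

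\textbf{Step 3: $C^-$ is convex.} The sets $K_c$ increase with $c$, and
\[
C^-=\bigcup_{c\in(-a^4,0)}K_c .
\]
An increasing union of convex sets is convex: two points of $C^-$ lie in a common $K_c$, hence so does the segment joining them. The same argument, using $C^+$ and the right components, gives the statement for $C^+$.

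\textbf{Step 4: quasiconvexity of $f_a\big|_{C^-}$.} The sublevel sets of the restriction are
\[
\{p\in C^-\ | \ f_a(p)\leq c\}=
\begin{cases}
\emptyset, & c<-a^4,\\
\{(-a,0)\}, & c=-a^4,\\
K_c, & c\in(-a^4,0),\\
C^-, & c\geq 0.
\end{cases}
\]
All of these are convex by Steps 2 and 3, so $f_a\big|_{C^-}$ is quasiconvex.

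\textbf{Main obstacle.} The delicate step is Step 2: passing rigorously from the sign of the curvature determinant to convexity of $K_c$. This needs the exact positive factor relating \eqref{curvature} to $3(x^2+y^2)^2-c$, and the identification of the correct orientation, namely that the gradient points outward so that positive sign means convex toward the interior. I would first verify this sign at the explicit point $(-\sqrt{a^2+\sqrt{a^4+c}},0)\in\Gamma_c$. I would then invoke the fact that a simple closed curve with nonvanishing curvature has total curvature $2\pi$ and is therefore convex.
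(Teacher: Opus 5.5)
Your proposal is correct and follows essentially the same route as the paper. The paper likewise gets openness of $C^{\pm}$ as the intersection of an open half-plane with $f_a^{-1}(-\infty,0)$. It gets convexity of each component of $f_a^{-1}(-\infty,c]$, $c\in(-a^4,0)$, from the constant sign of the curvature of the Cassini ovals, citing do Carmo's convexity criterion with Goldman's curvature formula, which is your Hadamard argument. It then gets convexity of $C^{\pm}$ as an increasing union and reads quasiconvexity off the sublevel sets.

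The orientation issue you single out is harmless. On $f_a^{-1}(c)$ the determinant \eqref{curvature} equals $-32(c+a^4)\bigl(3(x^2+y^2)^2-c\bigr)$, so the factor is actually negative, not positive as you assumed. Only its nonvanishing for $c>-a^4$ matters, because a simple closed regular curve with nowhere vanishing curvature bounds a convex region for either orientation.
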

\begin{proof}
The openness of $C_{\pm}$ follow by using their representations as 
\[C_-=p_1^{-1}(-\infty,0)\cap f_a^{-1}(-\infty,0)\mbox{ and }C_+=p_1^{-1}(0,+\infty)\cap f_a^{-1}(-\infty,0),\] 
via the continous functions $p_1$ and $f_a$, where $p_1:\mathbb{R}^2\longrightarrow\mathbb{R}$ is the projection on the first factor.
We recall that the level set $f_a^{-1}(c)$ is, according to \cite{PiTo}, regular, nonempty 
and its curvature does not change the sign if and only if $c\in(-a^4,0)\cup[3a^4,+\infty)$. In fact every sublevel set $f_a^{-1}(-\infty,c]$, 
for $c\in(-a^4,0)$, has two compact connected components $C^{-}_c$, $C^{+}_c$ and is bounded by the regular level $f_a^{-1}(c)$ which is the union of two Cassini's 
ovals whose curvatures do not change their signs and each of these connected components $C^{\pm}_c$ are therefore convex via \cite[Proposition 1, p. 397]{Carmo} combined with 
\cite[Formula 3.7]{Goldman} . These convex components 
can be represented as 
\begin{align}
& C^{-}_c:=\{(x,y)\in\mathbb{R}^2| \ x<0 \mbox{ and }-a^4\leq f_a(x,y)<c\}=(f_a\big|_{C^{-}})^{-1}(-\infty,c)\nonumber \\
& C^{+}_c:=\{(x,y)\in\mathbb{R}^2| \ x>0 \mbox{ and }-a^4\leq f_a(x,y)<c\}=(f_a\big|_{C^{+}})^{-1}(-\infty,c) \nonumber
\end{align}
and obviously 
\begin{equation}
C^-:=\bigcup_{n=1}^\infty C^-_{-\frac{a^4}{n}}\ , C^+:=\bigcup_{n=1}^\infty C^+_{-\frac{a^4}{n}}. \label{eq26.12.2025.1}
\end{equation}
Since 
\[(-a,0)=C^-_{-\frac{a^4}{1}}\subseteq C^-_{-\frac{a^4}{2}}\subseteq \cdots\mbox{ and }(a,0)=C^+_{-\frac{a^4}{1}}\subseteq C^+_{-\frac{a^4}{2}}\subseteq \cdots \]
it follows, via the representations \eqref{eq26.12.2025.1}, that $C_{\pm}$ are convex. The convexity of the sublevel sets $C^{\pm}_c=(f_a\big|_{C^{\pm}})^{-1}(-\infty,c)$ 
show the quasiconvexity of $f_a\big|_{C^{\pm}}$.
\end{proof}
\begin{remark}
The restrictions $f_a\big|_{C^{\pm}}$ are not convex. Indeed by studying the variation of the functions 
$f_a\big|_{C^{-}}(t,0)$ and $f_a\big|_{C^{+}}(t,0)$ on the intervals $(-\sqrt{2}a,0)$ and $(0,\sqrt{2}a)$, where 
\[f_a\big|_{C^{\pm}}(t,0)=t^4-2a^2t^2,\] one can observe, via the second order derivatives $\frac{d^2}{dt^2}(f_a\big|_{C^{\pm}})(t,0)$, that
$f_a\big|_{C^{-}}(t,0)$ and $f_a\big|_{C^{+}}(t,0)$ change the convexity at $-\frac{a}{\sqrt{3}}$ and $\frac{a}{\sqrt{3}}$ respectively.
\end{remark}

\vskip 6mm

\bibliographystyle{amsplain}

\bibliography{references}
\end{document}